\documentclass[11pt]{article}
\usepackage{amsthm}
\newtheorem{tm}{Theorem}[section]
\newtheorem{lm}[tm]{Lemma}

\newtheorem{cor}[tm]{Corollary}

\newtheorem{??}[tm]{Question}

\usepackage{graphicx}
\usepackage{amssymb}
\usepackage{import}
\usepackage{xifthen}
\usepackage{pdfpages}
\usepackage{transparent}
\usepackage{color}

\font\tenmsb=msbm10
\font\sevenmsb=msbm7
\font\fivemsb=msbm5

\newfam\msbfam
\textfont\msbfam=\tenmsb
\scriptfont\msbfam=\sevenmsb
\scriptscriptfont\msbfam=\fivemsb
\def\Bbb#1{{\fam\msbfam #1}}

\font\teneufm=eufm10
\font\seveneufm=eufm7
\font\fiveeufm=eufm5
\newfam\eufmfam
\textfont\eufmfam=\teneufm
\scriptfont\eufmfam=\seveneufm
\scriptscriptfont\eufmfam=\fiveeufm

\newcommand\comp{{\mathbb{C}}}
\newcommand\real{{\Bbb R}}

\newcommand\Real{{\rm{Re}}}
\newcommand\Imag{{\rm{Im}}}

\title{On Simply Connected Domains Supporting an Unbounded Analytic Function with Bounded Derivative}
\author{Cameron MacMahon}
\date{September 2026}

\begin{document}

\maketitle

\begin{abstract}
In this paper we provide a geometric characterization of simply connected domains in the complex plane supporting an unbounded analytic function with bounded derivative, providing a partial answer to Problem 1.45 in the Hayman List \cite{hayman}. For a given simply connected domain this characterization is achieved by estimating above and below the norms of certain evaluation functionals on a suitable Banach space of analytic functions on that domain by geometric quantities. 
\end{abstract}

\section{Introduction}

Let $\Omega$ be a domain in $\comp$. An analytic function $f$ on $\Omega$ is called $\emph{strongly unbounded}$ if there exists a sequence $\{z_m\}_{m \in \Bbb{N}} \subset \Omega$ along which $f$ and all of its derivatives tend to infinity. Following James Hinchliffe \cite{hinch}, we call a domain $\Omega$ a $\emph{Rubel Domain}$ if every unbounded analytic function on $\Omega$ is strongly unbounded. These domains are named after Lee Rubel, who proposed \cite{rubel} that the unit disc $\Bbb{D}$ had this property. This was confirmed by Alexander Gordon \cite{gordon}, who invoked an inductive argument to construct a sequence along which all derivatives of a given analytic function $f$ tend to infinity from a sequence along which only $f$ is known to tend to infinity. Hinchliffe \cite{hinch} used a variation of this inductive argument to prove that all quasidiscs are Rubel domains. In Walter Hayman's $``$Research Problems in Function Theory," Problem 1.45 due to James Langley \cite{hayman} asks the reader to $``$Determine necessary and/or sufficient conditions for a bounded plane domain to be a Rubel domain." 

\bigskip

\noindent The simplest obstruction to a domain $\Omega$ being Rubel is the existence of an unbounded analytic function whose first derivative is bounded. For instance, take $f(z) = z$ on any unbounded plane domain and observe that any Rubel domain must be bounded. Even bounded domains can possess such a function (see Figure 1). \begin{figure}[ht]
    \centering
    \def\svgwidth{.6\textwidth}
    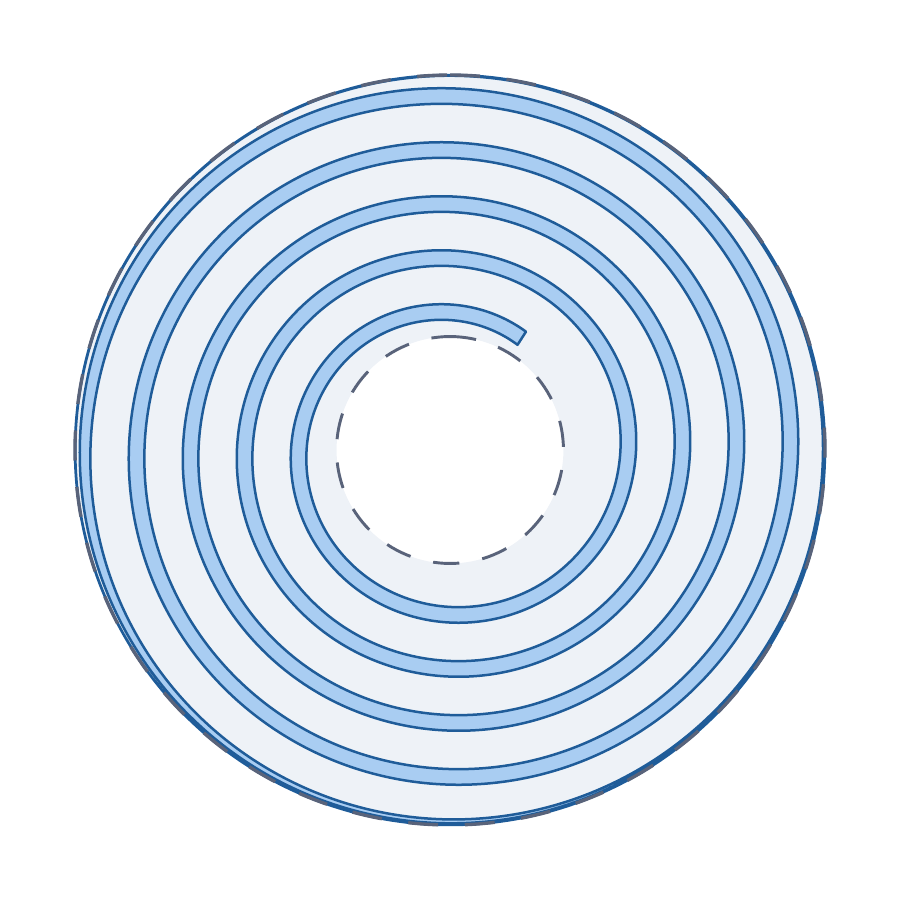

    \caption{A Bounded Spiral Domain in an annulus about $0$ on which $\log(z)$ is unbounded.}
    \label{fig:circles}
\end{figure} As a first step towards Langley's question, it would therefore be desirable to determine a characterization of those planar domains that possess an unbounded analytic function whose first derivative is bounded. The purpose of this paper is to provide such a characterization in the case that $\Omega$ is simply connected. We prove

\begin{tm}
    A simply connected domain $\Omega$ in the complex plane supports an unbounded analytic function whose first derivative is bounded if and only if $\Omega$ has infinite diameter in its interior path metric.  
\end{tm}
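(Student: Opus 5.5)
The easy direction is a path-integration estimate. Suppose $f$ is analytic on $\Omega$ with $|f'|\le M$, fix a base point $z_0\in\Omega$, and let $d_\Omega$ denote the interior path metric. Integrating $f'$ along any rectifiable path in $\Omega$ from $z_0$ to $z$ gives
$$|f(z)-f(z_0)|\le M\, d_\Omega(z_0,z).$$
So if $\Omega$ has finite interior diameter, every analytic function with bounded derivative is bounded. This half does not use simple connectivity.

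For the converse I would follow the functional-analytic route suggested in the abstract. Let $X$ be the space of analytic $f$ on $\Omega$ with $\|f\|=|f(z_0)|+\sup_\Omega|f'|<\infty$. This is a Banach space: derivatives converge uniformly, and then the functions themselves converge locally uniformly by integration along paths from $z_0$. Let $\delta_z(f)=f(z)$. The estimate above gives the upper bound $\|\delta_z\|\le 1+d_\Omega(z_0,z)$.

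Now assume every $f\in X$ is bounded. Then for each $f$ we have $\sup_z|\delta_z(f)|<\infty$, so the uniform boundedness principle gives $\sup_z\|\delta_z\|<\infty$. It therefore suffices to prove a matching lower bound
$$\|\delta_z\|\ \ge\ c\,d_\Omega(z_0,z)-C,$$
or at least a lower bound by some geometric quantity that is unbounded whenever $d_\Omega$ is. That is the whole content of the hard direction.

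To get the lower bound I would pass to the disc through a Riemann map $\varphi:\Bbb D\to\Omega$ with $\varphi(0)=z_0$, and write $z=\varphi(r)$ after a rotation. Let $\gamma=\varphi([0,r])$ be the hyperbolic geodesic from $z_0$ to $z$. By the Gehring--Hayman theorem, the length of $\gamma$ is comparable to $d_\Omega(z_0,z)$.

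Next I would cover $[0,r]$ by hyperbolic Whitney intervals $I_k$. On the image of each $I_k$, the Koebe distortion theorem makes $\varphi'$ roughly constant. The target is a test function $f_z\in X$ with $\|f_z\|\le C$ and $\Real f_z(z)\ge c\cdot\mathrm{length}(\gamma)$. Such an $f_z$ would be a primitive of some $g$ with $|g|\le C$ on $\Omega$ and $\Real\bigl(g\cdot\text{unit tangent of }\gamma\bigr)\ge c$ along $\gamma$.

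The naive candidate is $g\circ\varphi=|\varphi'|/\varphi'$ on the radius, but it has no bounded analytic extension because $\arg\varphi'$ can be unbounded, as in spiralling domains such as Figure 1. So I would build $g$ piece by piece instead. On the $k$-th Whitney piece, the argument of $\varphi'$ is nearly a constant $\theta_k$. I would take $g\circ\varphi$ to be a sum of bounded analytic bump functions on $\Bbb D$, each essentially $e^{-i\theta_k}$ near the piece $I_k$ and decaying rapidly away from it. A natural choice is normalized Poisson-type or Cauchy kernels centred at the boundary point nearest to $I_k$, with amplitudes summed so that the total stays bounded.

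If an exact construction proves awkward, the fallback is to dualize. One would estimate $\|\delta_z\|$ from below by testing against $f\mapsto f(z)-f(z_0)=\int_\gamma f'$ and using $H^\infty$/$L^1$ duality on $\Bbb D$ for the pulled-back functional. The lower bound would then reduce to showing that the measure $|\varphi'|\,dt$ on $[0,r]$ is far from annihilated by bounded analytic functions.

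The main obstacle is this lower bound on $\|\delta_z\|$. Specifically, it requires controlling the interaction between the rotating phases $e^{-i\theta_k}$ and the sup-norm of the combined $g$. I expect the Koebe and Gehring--Hayman estimates, together with a Carleson-type separation of the Whitney pieces along the radius, to keep the overlapping tails summable. Once the lower bound holds, the uniform boundedness principle finishes the proof: if $\Omega$ has infinite interior diameter, some $f\in X$ must be unbounded, and it has bounded derivative by definition of $X$.
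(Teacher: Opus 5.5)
Your architecture matches the paper's. The easy direction is Lemma 3.1. The hard direction is reduced, through the uniform boundedness principle, to a lower bound $\|\delta_z\|\ge c\,d_\Omega(z_0,z)$, that is, to building a bounded analytic function whose argument along the hyperbolic geodesic cancels $\arg\varphi'$. (Gehring--Hayman is unnecessary: you only need $\ell(\gamma)\ge d_\Omega(z_0,z)$, which is trivial, as in Lemma 3.2. The duality fallback merely restates the same estimate.)

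The gap is that this construction, which is the whole content of the hard direction, is never carried out; the proposal ends with ``I expect''. The kernels you suggest do not obviously work.

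\begin{itemize}
\item Every Whitney piece of $[0,r]$ has the same nearest boundary point $1$. So Cauchy kernels such as $2^{-k}/(1+2^{-k}-z)$ decay only on the side away from $1$. On $I_j$, all those with $k\le j$ have size at least $1/2$, and $\sum_k|\beta_k|$ grows like $j$.
\item Two-sided bumps such as $2^{-k}(1-z)/(1-z+2^{-k})^2$ satisfy $|\beta_k(z)|\le\min\bigl(2^{-k}/|1-z|,\,|1-z|/2^{-k}\bigr)$, so their moduli do sum boundedly. But on $I_j$ the neighbours $\beta_{j\pm1},\beta_{j\pm2}$ are comparable in size to $\beta_j$ itself.
\item Meanwhile the distortion bound $|\partial_t\arg\varphi'(t)|\le 4/(1-t^2)$ allows the phase to turn by more than a radian from one piece to the next.
\end{itemize}

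So nothing prevents the rotated neighbours from cancelling the main term, and $\Real\bigl(e^{i\arg\varphi'}\sum_k e^{-i\theta_k}\beta_k\bigr)\ge c$ on the radius is unproved. What is needed is a kernel that is analytic and uniformly bounded on the whole domain, yet concentrated along the geodesic at a scale small compared with the rotation rate of the phase. Kernels at the natural hyperbolic scale are too wide for this.

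The paper obtains such a kernel by moving to the strip $S=\{|\Imag w|<1\}$, with $\psi(a)=p$, $\psi(b)=q$ and $a<b$ real. There Koebe gives $|B'|\le4$ and $|B''|\le16$ for $B=\arg\psi'$ on $\Bbb R$. The paper then convolves with the Gaussian $k$ of width $\sigma=1/3$. Since $k$ is entire with $|k(s+ib)|=e^{b^2/(2\sigma^2)}k(s)$, narrowness costs only a constant on $S$.

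The function $G=B*k$ is entire and real on $\Bbb R$, with $|G-B|<\pi/3$ there. It also satisfies $|\Imag G|\le\lambda$ on $S$: use $\int\Imag k(s+ib)\,ds=0$ to replace $B(a-s)$ by $B(a-s)-B(a)$, whose linear growth the Gaussian absorbs. Setting $F'=\psi'e^{-\lambda-iG}$ gives $|f'|\le1$ and $|f(p)-f(q)|\ge\frac12e^{-\lambda}\int_a^b|\psi'|\ge\frac12e^{-\lambda}d_\Omega(p,q)$.

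Your linear bump-sum idea can be completed in the same coordinates by the continuous sum $H=(e^{-iB})*k$. Here $|H|\le e^{1/(2\sigma^2)}$ on $S$. On $\Bbb R$,
\[
\Real\bigl(e^{iB(a)}H(a)\bigr)=\int\cos\bigl(B(a)-B(a-s)\bigr)k(s)\,ds\ge e^{-8\sigma^2}-2\int_{|s|>\pi/4}k(s)\,ds,
\]
which is positive for $\sigma=1/3$, using only $|B'|\le4$. Either way, the entire Gaussian kernel is the missing ingredient.
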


\noindent Given a domain $\Omega$, the interior path metric $d_\Omega(p,q)$ is a metric on $\Omega$ defined for two points $p, q \in \Omega$ to be the infimal length among rectifiable curves lying entirely in $\Omega$ joining $p$ to $q$. Thus $\Omega$ has infinite diameter in its interior path metric if and only if for any large $M > 0$ we may find two points inside $\Omega$ so that the infimal length of rectifiable paths joining them and lying entirely inside $\Omega$ is greater than or equal to $M$. Note that, in accordance with Theorem 1.1, the domain in Figure 1 has unbounded diameter in its interior path metric. 

\bigskip

\noindent Theorem 1.1 is proved by comparing $d_\Omega(p,q)$ to a quantity depending only on the analytic functions on $\Omega$ as follows. Let $\mathcal{O}(\Omega)$ denote the set of analytic functions on $\Omega$. Let $X = \{f \in \mathcal{O}(\Omega) : |f'| \leq 1\}$ and let $X(p,q) := \sup_{f \in X} |f(p) - f(q)|$. Theorem 1.1 is a consequence of 

\begin{tm}
   There is a $c > 0$ such that on any simply connected domain $\Omega \subset \comp$,  and for all $p,q \in \Omega$

    $$cd_\Omega(p,q) \leq X(p,q) \leq d_\Omega(p,q)$$
    
\end{tm}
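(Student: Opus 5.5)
The upper bound $X(p,q)\le d_\Omega(p,q)$ is immediate. For $f\in X$ and any rectifiable curve $\gamma\subset\Omega$ from $p$ to $q$ we have $|f(p)-f(q)|=\left|\int_\gamma f'(z)\,dz\right|\le \mathrm{length}(\gamma)$. Taking the infimum over $\gamma$ and then the supremum over $f$ gives the bound. All the work is in the lower bound: we must produce, for fixed $p,q$, a single analytic $f$ with $|f'|\le 1$ and $|f(p)-f(q)|\ge c\,d_\Omega(p,q)$, with $c$ universal.

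My plan is to pass to the unit disc by a Riemann map $\phi:\Bbb{D}\to\Omega$ with $\phi(0)=p$, and look for $f$ of the form $f'=g\circ\phi^{-1}$ with $g$ bounded by $1$. Then $f(q)-f(p)=\int_0^{w} g(\zeta)\phi'(\zeta)\,d\zeta$ with $w=\phi^{-1}(q)$. The natural first choice is $g=\overline{\phi'}/|\phi'|$ along the hyperbolic geodesic, i.e. the radius $[0,w]$. This gives $\int_0^{|w|}|\phi'|\,dr$, the Euclidean length of the hyperbolic geodesic, which by the Gehring--Hayman theorem is comparable to $d_\Omega(p,q)$.

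That choice of $g$ is not analytic, so I would instead take $g=e^{-i\arg\phi'}$ made analytic, namely $g=\left(\phi'/|\phi'|\right)^{-1}$ realized as $e^{-i\,\mathrm{Im}\log\phi'}$. This fails to be analytic. The fix is to use $g=\exp(-i\log\phi'(\zeta)+\text{correction})$ suitably damped. Alternatively, and more cleanly, I would take $f'=(\phi')^{-1+s}\circ\phi^{-1}$-type expressions. Concretely, let $h=\log\phi'$ (well defined since $\Omega$ is simply connected and $\phi'\ne0$) and set $g=\exp(-i\,\theta\, h)$ times a rotation, so that $|g|=e^{\theta\,\mathrm{Im}\,h}$. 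This is bounded only if $\arg\phi'$ is controlled, and it is not in general, since spirals make $\arg\phi'$ unbounded. So I expect the main obstacle to be constructing the bounded analytic "unimodular-on-the-geodesic" multiplier.

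My preferred route around this is to give up on exact cancellation and argue by duality/localization. Decompose the radius $[0,w]$ into Whitney-type hyperbolic segments $I_k$ of bounded hyperbolic length. By Koebe distortion, on each $I_k$ the quantity $\phi'$ is, up to constants, a constant $a_k$ in modulus and argument. The Euclidean image lengths $|a_k||I_k|$ then sum to roughly $d_\Omega(p,q)$ by Gehring--Hayman. We need a bounded analytic $g$ on $\Bbb{D}$ with $\mathrm{Re}\big(g\,\phi'\big)\gtrsim|\phi'|$ on most of the geodesic. The argument $\arg a_k$ varies by a bounded amount between consecutive segments (Koebe again), so $u(\zeta)=-\arg\phi'(\zeta)$, a harmonic function, is the target phase. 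By Schwarz--Pick/Koebe, $\mathrm{Im}\log\phi'$ grows at most like a constant times hyperbolic distance, which is a Bloch-type bound. I would then try $g=\exp(i\,(-i\log\phi')^{*})$-style constructions, specifically $g=\exp\!\big(-i\log\phi'-\log|\phi'|+\log|\phi'|\big)$, and realize that this reduces to showing that the harmonic function $\log|\phi'|$ can be replaced by a bounded-modulus analytic function.

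The fallback I would actually commit to is this. Use the Bloch property of $\log\phi'$ together with the geodesic being a radius, and choose $g(\zeta)=\exp\big(-i(\log\phi'(\zeta)-\log\phi'(0))\big)\cdot\exp\big(-\varepsilon\,\cdot\big)$ restricted suitably. Then verify that along the radius $|g|=e^{\arg\phi'-\arg\phi'(0)}$. This forces proving that $\arg\phi'$ along the geodesic cannot increase without being compensated by growth of $d_\Omega$. That interplay, namely bounding the winding of the geodesic against the Euclidean length it consumes, is the heart of the proof and the step I expect to be hardest.
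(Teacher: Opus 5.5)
The upper bound is correct and is the same Fundamental Theorem of Calculus argument the paper uses. The lower bound, however, is not proved. You correctly identify the crux: a bounded analytic multiplier whose phase cancels $\arg\phi'$ along the geodesic. But you never construct one, and the choice you commit to fails on both requirements. With $h=\log\phi'$ and $g=e^{-i(h-h(0))}$ one has $|g|=e^{\arg\phi'-\arg\phi'(0)}$ and $\arg(g\phi')=\arg\phi'-\log|\phi'/\phi'(0)|$.

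\emph{Phase.} $g$ rotates by $-\log|\phi'|$, not by $-\arg\phi'$, so $g\phi'$ is not aligned on the radius.

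\emph{Modulus.} $|g|$ must be bounded on all of $\Bbb{D}$, not just on the radius, to give $|f'|\le 1$. It is of size about $e^{2\pi N}$ when $\Omega$ contains a spiral making $N$ positive turns. For spirals turning the other way it is exponentially small on the geodesic, which destroys the lower bound. The unspecified damping $\exp(-\varepsilon\,\cdot)$ addresses neither issue.

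\emph{Rescue.} Bounding the winding of the geodesic by the Euclidean length it consumes cannot hold with a universal constant. $\arg\phi'$ is invariant under dilating $\Omega$, while $d_\Omega$ scales linearly. So a small spiral has a geodesic along which $\arg\phi'$ changes by $2\pi N$ while $d_\Omega(p,q)$ is arbitrarily small.

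(Gehring--Hayman is also unnecessary: the image of $[0,w]$ is a curve from $p$ to $q$, so its length is at least $d_\Omega(p,q)$, and that is the only direction you use.)

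The missing idea is to give up exact cancellation and settle for approximate alignment. It suffices to find an analytic $G$ with three properties: $\Imag G\le\lambda$ on the whole domain for a universal $\lambda$, $G$ real on the geodesic, and $|G-\arg\phi'|<\pi/3$ there (any fixed angle below $\pi/2$ works). Such a $G$ exists because of the Bloch bound you mention.

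The paper builds it as follows. It uniformizes by the strip $S=\{|\Imag w|<1\}$ with $\psi(a)=p$, $\psi(b)=q$, $a<b$ real. Translation invariance of $S$ turns the Bloch bound into uniform bounds $|B'|\le 4$, $|B''|\le 16$ on $\real$ for $B=\arg\psi'$, via Koebe plus Cauchy estimates. Then $G=B*k$, with $k$ the Gaussian of width $\sigma=1/3$, is entire and real on $\real$.

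Because $\int s\,k(s)\,ds=0$, the bound on $B''$ gives $|G-B|<\pi/3$ on $\real$. Because $\int\Imag k(s+ib)\,ds=0$, one can write $\Imag G(x+ib)=\int(B(x-s)-B(x))\Imag k(s+ib)\,ds$. The Lipschitz bound and Gaussian decay then make this at most an absolute constant $\lambda$ on $S$.

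Setting $F'=\psi'e^{-\lambda-iG}$ gives $|F'|=|\psi'|e^{\Imag G-\lambda}\le|\psi'|$, so $f=F\circ\psi^{-1}$ satisfies $|f'|\le1$. On $[a,b]$ one has $\Real F'\ge\frac12e^{-\lambda}|\psi'|$, hence $|f(q)-f(p)|\ge\frac12e^{-\lambda}\int_a^b|\psi'|\ge\frac12e^{-\lambda}d_\Omega(p,q)$.

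Your Whitney-segment observation supplies the right input. What it lacks is a way to glue the local phases $\arg a_k$ into one analytic function with controlled imaginary part, and fixed-width smoothing in the strip does exactly that.
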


\noindent  Theorem 1.1 provides a geometric characterization of those simply connected domains for which, given any unbounded analytic function, there is always a sequence along which that function and its first derivative tend to infinity. Define the set of domains $Rubel(k)$ to be the set consisting of those domains $\Omega \subset \comp$ for which every unbounded analytic function $f$ on $\Omega$ has a sequence $\{z_n\}_{n \in \Bbb{N}} \subset \Omega$ upon which $f, f', f'' ..., f^{(k)}$ all tend to infinity,  with standard Rubel domains being denoted $Rubel(\infty)$. We reserve the terminology $Rubel_0(k)$ for simply connected sets belonging to $Rubel(k)$. For instance, Theorem 1.1 gives a simple geometric characterization of the set of domains $Rubel_0(1)$ while Problem 1.45 above \cite{hayman} asks for a characterization of $Rubel(\infty)$. 

\bigskip

\noindent Note that $Rubel(k+1) \subset Rubel(k)$ and $Rubel_0(k+1) \subset Rubel_0(k)$.  Looking to extend our result, we might ask if being simply connected and bounded for the interior path metric is enough to guarantee $Rubel_0(\infty)$. In other words, we may ask if all of these inclusions of sets are in fact equalities of sets. This is false. We can construct a simply connected domain $\Omega$ which is bounded for its interior path metric and an unbounded analytic function on it so that there is no sequence along which $f$ and all of its derivatives tend to infinity in modulus along it. In fact, in Section 4 we prove something stronger:

\begin{tm}
    $Rubel_0(1) \not\subset Rubel_0(3)$. In particular, $Rubel(1) \not\subset Rubel(3)$.
\end{tm}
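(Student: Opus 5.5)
We need a simply connected domain $\Omega$ that is bounded in its interior path metric, so that $\Omega\in Rubel_0(1)$ by Theorem 1.1, together with an unbounded analytic $f$ on $\Omega$ such that no sequence makes $f,f',f'',f'''$ all tend to infinity. The plan is to take $\Omega$ so that $f'$ is unbounded only near one boundary point while $f''$ or $f'''$ stays controlled along every sequence on which $f$ and $f'$ blow up. The natural model is a domain with an inward-pointing cusp at $0$, with $f$ a power or logarithm of $z$.

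\textbf{Step 1: the domain.} Take
$$\Omega=\{z=x+iy: 0<x<1,\ |y|<\phi(x)\},$$
a thin region tapering to $0$, with $\phi(x)=x^{a}$ for some $a>1$, possibly with a slit or exponential pinching added. Such a region is a Jordan domain with rectifiable boundary, so its interior path diameter is finite and Theorem 1.1 places it in $Rubel_0(1)$.

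\textbf{Step 2: the function.} Take $f(z)=z^{-\beta}$ with $0<\beta$, branch chosen on the right half plane. Then $|f^{(k)}(z)|\asymp |z|^{-\beta-k}$, so all derivatives blow up as $z\to0$; this alone fails, because a pure power has no cancellation. Instead take $f=g\circ\psi$, where $\psi$ maps $\Omega$ into a horizontal half-strip $S=\{w: \Real\, w>0,\ |\Imag\, w|<\pi/2\}$ and
$$g(w)=w+\sum_n c_n e^{-\lambda_n w}$$
is a gap series chosen so that $g$ is unbounded but $g''$ or $g'''$ is bounded on the region where $g'$ is large. A cleaner variant is to let $\Omega$ be the image of $S$ under a map $F$ with $F'$ integrable along horizontal lines, which gives finite path diameter. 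Then $f=F^{-1}$ composed with something like $\log w$ makes $f$ grow slowly, like $\log\log$, which forces $f'$ to go to infinity only near the tip. We then compute $f'''$ via Faà di Bruno and pick the growth of $F$ so that the leading terms of $f'''$ cancel up to a bounded error along the only sequences where $|f|,|f'|\to\infty$.

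\textbf{Step 3: ruling out sequences.} If $f(z_n)\to\infty$, then $z_n$ must approach the tip; we show this by a compactness argument together with boundedness of $f$ away from the tip. Near the tip we then have explicit asymptotics for $f',f'',f'''$ in the strip coordinate $w=\sigma+i\tau$. The claim to verify is that whenever $|f'(z_n)|\to\infty$, the quantity $|f'''(z_n)|$ stays bounded along a subsequence, or $f''$ does. This requires a two-variable asymptotic analysis in $(\sigma,\tau)$, splitting into the cases where $\tau$ is near the edge of the strip and where it is in the interior.

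\textbf{Main obstacle.} The hard part is choosing $F$ (equivalently $\phi$) so that the cancellation in $f'''$ actually occurs uniformly across the cross-section of the cusp; maximum-principle effects tend to force all derivatives large together. My first attempt would be to make $f''$ small by making $f$ nearly affine along the cusp, $f(z)\approx \alpha\log(1/z)$ composed with a conformal straightening. Then $f'$ is large from the stretching, $f''$ is large, but the straightening is tuned so that the Schwarzian-type combination governing $f'''$ cancels. I would test this with explicit choices such as $\phi(x)=e^{-1/x}$ before committing.
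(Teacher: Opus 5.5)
There is a genuine gap, and it is in the central step. Your proposal never constructs $\Omega$ and $f$. Steps 2 and 3 list candidate mechanisms (a gap series on a half-strip, $F^{-1}$ followed by a logarithm, a tuned ``Schwarzian'' cancellation) and postpone exactly what has to be proved, namely that $\min(|f|,|f'|,|f''|,|f'''|)$ is bounded on $\Omega$.

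More seriously, the mechanism you describe cannot work. Each of your candidates makes $f$ grow monotonically toward the single tip, so $|f(x)|\to\infty$ as $x\to0^+$ along the segment $(0,1)\subset\Omega$. That segment has length $1$, and along it $|\frac{d^k}{dx^k}f(x)|=|f^{(k)}(x)|$. Suppose $|f|>K$ on $(0,\epsilon]$ with $\epsilon\le1$, and $\min(|f'|,|f''|,|f'''|)\le K$ there. Put $\psi=f'$.
\begin{itemize}
\item Wherever $|\psi|>K$ and $|\psi'|>K$ we have $|\psi''|\le K$, so $\psi'$ is $K$-Lipschitz on each component $I$ of that set.
\item If $I$ has an endpoint with $|\psi'|=K$, then $|\psi'|\le 2K$ on $I$.
\item If both endpoints of $I$ have $|\psi|=K$, take an interior maximum $t_1$ of $|\psi|^2$ on $I$. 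There
\[
|\psi'(t_1)|^2\le|\psi(t_1)|\,|\psi''(t_1)|\le K|\psi(t_1)|,
\]
and combining this with the Lipschitz bound gives $\sup_I|\psi'|\le 3K$.
\item The at most two components touching $0$ or $\epsilon$ contribute only a finite constant.
\end{itemize}
Hence $\psi'$ is bounded on $\{|\psi|>K\}$. It follows that $\psi=f'$ is bounded on $(0,\epsilon]$, and therefore so is $f$, which is a contradiction. So no choice of $\phi$, $F$, or gap series with monotone growth at the tip can produce the cancellation you want. The obstruction is not a maximum-principle effect; it is an elementary Taylor-type fact that already holds for smooth functions on an interval.

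The missing idea is oscillation. Along a path of finite length, $f$ must return to bounded values infinitely often. Its large values must occur only where $f$ is essentially a quadratic polynomial, so that $f'''$ is small there.

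This is what the paper does, first for a smooth real function on $(0,1)$. On $I_j=[1-\frac1j,1-\frac1{j+1}]$ it takes the parabola $ju_j$ of height $j$, so $\partial_x^{3}f\equiv0$ wherever $f$ is large. It cuts this off with $\chi$ near the nodes so that $|f|\le 1$ there. The result is an unbounded $f$ with $\min(|f|,|\partial_x^{3}f|)\le1$ on $(0,1)$.

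The paper then approximates $f$ in $C^3$ by a real-analytic $g$ using Whitney's lemma. It continues $g$ to a thin cusp neighborhood $\{x+iy:|y|<\rho(x)\}$ of $(0,1)$, with $\rho$ small enough that $g$ and $g'''$ change by at most $\delta$. That neighborhood is simply connected with finite interior path diameter, so it lies in $Rubel_0(1)$ but not in $Rubel_0(3)$. Only the pair $(|f|,|f'''|)$ needs controlling; no cancellation among derivatives is required.
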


\noindent This theorem will be proved by first constructing a real analytic function on $(0,1)$ with desirable properties and analytically continuing it to a small open neighborhood of $(0,1)$. Theorem 1.3 is the first strict inclusion in the chain of inclusions $Rubel(k + 1) \subset Rubel(k)$. We prove in Section 5: 

\begin{tm}
    $Rubel(1) = Rubel(2)$. In particular, $Rubel_0(1) = Rubel_0(2)$. 
\end{tm}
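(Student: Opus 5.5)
Since $Rubel(2)\subset Rubel(1)$ is trivial, the plan is to prove $Rubel(1)\subset Rubel(2)$ by contradiction. The argument is only partly worked out: I have the setup and the conversion of the hypothesis into a pointwise condition, but not the key auxiliary function.

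\emph{Setup.} Let $\Omega\in Rubel(1)$ and let $f$ be unbounded analytic on $\Omega$. Since $z\mapsto z$ is unbounded with bounded derivative on any unbounded domain, $\Omega$ is bounded. Applying the hypothesis to $f$ itself shows that $f'$ is unbounded. Suppose no sequence exists along which $f,f',f''$ all tend to infinity. I first convert this into a pointwise statement: there is an $M>0$ such that at every $z\in\Omega$,
$$\min\{|f(z)|,|f'(z)|,|f''(z)|\}\le M .$$
Indeed, if for every $M$ some point had all three moduli exceeding $M$, then choosing such points $z_M$ with $M\to\infty$ would give the forbidden sequence.

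\emph{Strategy.} The idea is to use the hypothesis $\Omega\in Rubel(1)$ not on $f$ but on an auxiliary unbounded function $g$ built from $f$ and $f'$. The function $g$ should be chosen so that, at points where both $|g|$ and $|g'|$ are large, the pointwise condition forces all three of $|f|,|f'|,|f''|$ to exceed $M$. That contradicts the choice of $M$, so no such sequence for $g$ exists, which contradicts $\Omega\in Rubel(1)$.

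\emph{Candidate functions.} Natural candidates are polynomial expressions such as $g=f\,f'$ or $g=f'^2+\lambda f$, with a large constant $\lambda$ to be fixed.
\begin{itemize}
\item For $g=f\,f'$ we have $g'=f'^2+f f''$. If $|g|,|g'|\to\infty$ but $|f|\le M$ along the sequence, then $|f'|\to\infty$; if $|f'|\le M$, then $|f|\to\infty$.
\item The remaining case is $|f''|\le M$ with $|f|,|f'|$ both large. In that case the formula for $g'$ gives no control, so one must instead use geometric information. Where $|f''|\le M$, the derivative $f'$ is $M$-Lipschitz along paths. Hence $f'$ stays comparable to $f'(z_0)$ on a disc of radius $\sim |f'(z_0)|/M$ intersected with $\Omega$. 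This is the same mechanism as in Gordon's inductive argument.
\end{itemize}
I would then try to combine this local rigidity with the dichotomy above. The aim is either to show directly that the bad case cannot persist along the whole sequence, or to modify $g$ so that the case $|f''|\le M$ is excluded.

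\emph{Main obstacle.} The hardest step is finding the right $g$: one that is provably unbounded and whose derivative being large forces $|f''|>M$ at the same points where $|f|,|f'|>M$. I would first try to show that $g$ is unbounded using the unboundedness of $f$ and $f'$ together with the pointwise condition. Then I would treat the case $|f''|\le M$ on a set of points by integrating $f''$ along short segments near the boundary, aiming to produce nearby points where $|f''|$ is large while $|f|$ and $|f'|$ remain large.
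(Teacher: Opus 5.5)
Your reduction of the contradiction hypothesis to the pointwise bound $\min\{|f|,|f'|,|f''|\}\le M$ is correct. But the proof stops there, and the scheme you propose for continuing cannot work as formulated. You want the pointwise bound to force $|f|,|f'|,|f''|>M$ wherever $|g|$ and $|g'|$ are large. That is a statement about triples $(a,b,c)=(f,f',f'')(z)$, and for $g=ff'$ it fails in all three cases, not only when $|f''|\le M$. The triples $(M,t,t)$, $(t,M,t)$, $(t,t,0)$ each satisfy the pointwise bound, yet give $|g|,|g'|\to\infty$ as $t\to\infty$. In particular, your observation that $|f|\le M$ forces $|f'|\to\infty$ contradicts nothing.

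The same happens for every $g=\Phi(f,f')$ with $\Phi$ entire and nonconstant, since $g'=\partial_a\Phi\, f'+\partial_b\Phi\, f''$. If $\partial_b\Phi\not\equiv 0$, pick $|a|\le M$ with $\Phi(a,\cdot)$ nonconstant (possible, since $\partial_b\Phi$ cannot vanish on $\{|a|\le M\}\times\comp$), then $b$ with $|\Phi(a,b)|$ large and $\partial_b\Phi(a,b)\neq 0$, then $c$ huge. If $\Phi=\phi(a)$, take $c=0$, a point $a$ with $|\phi(a)|$ large and $\phi'(a)\neq0$, and $b$ huge. Since $Rubel(1)$ applied to $g$ tells you nothing about the resulting sequence beyond $g,g'\to\infty$, no algebraic choice of $g$ can close the argument. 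Some global geometric input about $\Omega$ is indispensable. Your local Lipschitz remark about $f'$ does not supply it: the alternative realizing the pointwise bound can switch from point to point, and you use nothing about $\Omega$ beyond boundedness.

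That missing input is exactly what the paper uses. Membership in $Rubel(1)$ excludes unbounded functions with bounded derivative, so by Theorem 1.1 a simply connected $\Omega\in Rubel(1)$ has finite interior path diameter $\Delta$. (The paper asserts this for multiply connected $\Omega$ too, citing Lemma 3.1, but that lemma only gives the reverse implication. So the argument is really complete for $Rubel_0$.)

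With $u=\Real f$ or $\Imag f$ and points $x_n$ satisfying $u(x_n)\ge M^n$, join $x_n$ to $x_{n+1}$ by a curve of length at most $\Delta$. Then run the one-variable Theorem 5.1 along the concatenated curve:
\begin{itemize}
\item $u$ must climb from $M^{i-1}$ to $M^i$ within arclength $O(i\Delta)$. The mean value theorem gives points with $u\ge M^{i-1}$ and $u'\gtrsim M^{i-1}/(i\Delta)$.
\item Between consecutive stages, $u'$ must grow by roughly a factor $M$ over arclength $O(i\Delta)$. At the first exit of $u'$ from the strip $|y-u'(T_{i-1})|<u'(T_{i-1})/2$, the mean value theorem yields $y_i$ with $|u''(y_i)|\gtrsim u'(T_{i-1})/(i\Delta)$, while still $u'(y_i)\ge u'(T_{i-1})/2$ and $u(y_i)\ge u(T_{i-1})$.
\end{itemize}
It is this quantitative control of arclength between points of geometric growth, not any identity among $f,f',f''$, that makes all three large simultaneously.
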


\noindent This will follow from Theorem 5.1, a very general statement about smooth functions defined on Jordan rays. As an immediate corollary, the geometric condition of bounded diameter for the interior path metric actually extends to characterize $Rubel_0(2)$ domains as well.

\begin{cor}
    $Rubel_0(2)$ consists of precisely those simply connected domains which have bounded diameter for their interior path metric. 
\end{cor}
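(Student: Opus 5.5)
The corollary should follow by combining Theorem 1.1 with Theorem 1.4, since the condition in Theorem 1.1 is stated exactly as membership in $Rubel_0(1)$. I will first rephrase Theorem 1.1 in that language.

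A simply connected domain $\Omega$ lies outside $Rubel_0(1)$ exactly when some unbounded analytic $f$ on $\Omega$ has no sequence along which both $f$ and $f'$ tend to infinity. If $f'$ is bounded, no such sequence can exist, so $\Omega \notin Rubel_0(1)$. Conversely, the forward direction of the characterization needs: if $\Omega$ has infinite interior path diameter, then $\Omega \notin Rubel_0(1)$. By Theorem 1.1, such an $\Omega$ carries an unbounded $f$ with $|f'|$ bounded, which witnesses non-membership.

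For the reverse direction, suppose $\Omega$ has finite interior path diameter $D$. Let $f$ be unbounded with $f(z_n)\to\infty$, and suppose for contradiction that no sequence makes both $f$ and $f'$ blow up. I would argue directly rather than lean on Theorem 1.1's "only if" half, which only forbids bounded $f'$. If $f'$ were bounded by $M$, then $|f(z)-f(z_0)| \le M d_\Omega(z,z_0) \le MD$, so $f$ would be bounded, a contradiction.

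For general $f'$ we need more: a sequence along which $f$ and $f'$ both blow up. I would obtain it from the equivalence $Rubel_0(1)=\{\text{finite interior diameter}\}$, which is the content of Theorem 1.1 as the paper states right after it. This says Theorem 1.1 characterizes exactly the domains in which every unbounded function has such a sequence. Granting that reading, finite interior diameter gives $\Omega \in Rubel_0(1)$.

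Finally, Theorem 1.4 gives $Rubel_0(1)=Rubel_0(2)$, so $Rubel_0(2)$ is exactly the class of simply connected domains of bounded interior path diameter. The main obstacle is making sure Theorem 1.1 genuinely delivers membership in $Rubel_0(1)$ in the finite-diameter case, not merely the absence of a bounded-derivative unbounded function. If the paper's proof only gives the latter, I would fill the gap with a Gordon-type step. Fix $z_0$ and, for each $n$, join $z_0$ to $z_n$ by a path of length at most $D+1$. Since $|f(z_n)|\to\infty$, the integral of $|f'|$ along this path is at least $|f(z_n)|-|f(z_0)|$. So there is a point $w$ on the path where $|f'(w)| \ge (|f(z_n)|-|f(z_0)|)/(D+1)$. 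One then picks such a point where $|f|$ is also large, for instance the last point on the path where $|f|$ drops below $|f(z_n)|/2$ and a large derivative is forced on the remaining segment. This gives the required sequence $w_n$ with $f(w_n), f'(w_n) \to \infty$.
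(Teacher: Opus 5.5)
Your proposal is correct and follows the paper's route: the corollary comes from Theorem 1.1 (infinite interior diameter yields an unbounded $f$ with bounded $f'$, so $\Omega \notin Rubel_0(1) \supseteq Rubel_0(2)$) combined with $Rubel_0(1)=Rubel_0(2)$ from Theorem 1.4. Your fallback step is also correct: on a path of length at most $D+1$ from $z_0$ to $z_n$, take the last point where $|f|\le |f(z_n)|/2$ and bound $|f'|$ from below on the remaining arc. It makes explicit the implication ``finite interior diameter $\Rightarrow$ $Rubel_0(1)$,'' which Theorem 1.1 does not literally state and which the paper only supplies implicitly through the proof of Theorem 1.4.
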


\noindent This is also proven in Section 5. Section 6 contains a discussion of open questions. 

\bigskip 

\noindent We conclude with a few remarks before we proceed, which the impatient reader may skip. The geometric condition put forth in Theorem 1.1 is weaker than assuming that $\Omega$ supports a hyperbolic geodesic ray of infinite Euclidean length. Indeed, consider a comb domain (See Figure 2) with $N$ slits in the $N$'th tooth. \begin{figure}[ht]
    \centering
    \def\svgwidth{.6\textwidth}
    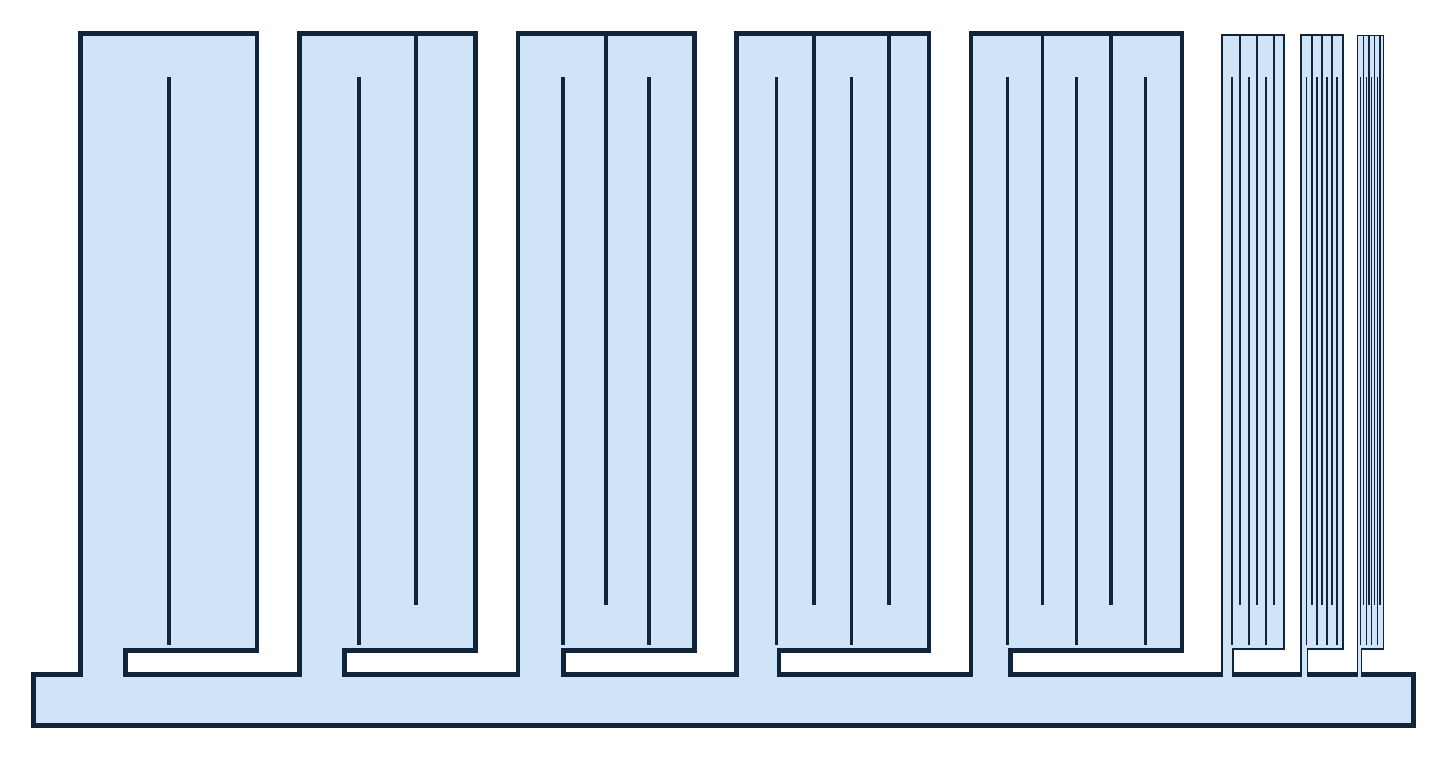

    \caption{A Comb Domain with $N$ slits in the $N$'th tooth}
    \label{fig:circles}
    \end{figure} The Gehring-Hayman Theorem \cite {gehhay} states that the Euclidean length of a hyperbolic geodesic segment (perhaps landing on the boundary) is bounded above by a constant multiple of the Euclidean length of any path joining the two endpoints. We may find such paths with finite length for hyperbolic geodesic rays in this comb domain, so there is no hyperbolic geodesic ray of infinite Euclidean length in this domain. However, there is a sequence of hyperbolic geodesic segments of Euclidean length tending to infinity, as they must each zig-zag through $N$ slits as $N \rightarrow \infty$. Therefore, this domain is unbounded in diameter for its interior path metric.  

    \bigskip

    \noindent Finally, we remark that Hinchliffe proves that $\emph{John domains}$ are Rubel domains as well, a larger well-known class of domains than the set of quasidiscs. In his terminology, Hinchliffe proves that any bounded domain in $\comp$ which contains a $\delta$-visible compact set is a Rubel domain. By the fact that John domains may not contain outward pointing cusps, they contain $\delta$-visible compact sets. For more details, see $\cite{hinch}$.

\subsection{Acknowledgments}

I would like to thank Professor Christopher Bishop without whom this paper would not exist, both for bringing this question to my attention and for discussing it with me at length.

\subsection{LLMs in Function Theory}
Following a suggestion of Professor Kirill Lazebnik, Professor Christopher Bishop, and James Waterman this paper is aimed in part at investigating efficient use of LLMs in function theory. Gemini Pro and Claude Opus 5 contributed to this paper in the following ways. The author suspected initially that the presence of a hyperbolic geodesic ray with infinite Euclidean length might signal the presence of an unbounded analytic function with bounded derivative. Following a suggestion of Christopher Bishop the assumption was revised to infinite diameter in the interior path metric as in Theorem 1.1. It was the author's idea to use the Uniform Boundedness Principle (Theorem 2.1) and the Fundamental Theorem of Calculus to construct the desired function. In discussion around these ideas, Gemini conjectured that $X(p,q) = d_\Omega(p,q)$ for all $p,q \in \Omega$. While the author knew this to be false, this lead the author to conjecture Theorem 1.2 in its current form. Similarly, it was the author's idea to construct a Bloch function whose imaginary part was bounded in modulus along a long curve. Opus 5's primary contribution was generating a sketch of the calculations and argumentation proving Theorem 1.2. The author is responsible for working out this argument completely and in detail. This amounted to correcting errors in computing constants, working out and checking estimates Opus 5 did not prove rigorously, and adding references to theorems (for example work of Mattner \cite{mattner}) where Opus 5 lacked justification or implicitly used a statement. Opus 5 also directed the author's attention to $\cite{whit}$. All of what is written below was written by the author - LLM writing advice was ignored. Future papers in this series will contain similar detailed statements to differentiate them from work done without LLM assistance. 

\section{Proof of Theorem 1.1 from Theorem 1.2}

\noindent We prove the theorems stated in the introduction. We begin by assuming Theorem 1.2 and using it to prove Theorem 1.1. The proof uses in a crucial way Theorem 2.1, the following version of the Banach-Steinhaus Uniform Boundedness Principle \cite{folland}:

\begin{tm}[Banach-Steinhaus, Sequence Version]
Suppose $X$ is a Banach Space and $Y$ is a normed vector space. Suppose $\{T_n\}_{n \in \Bbb{N}}$ is a sequence of bounded operators from $X$ to $Y$ such that the sequence $||T_n||$ is unbounded. Then there exists $x \in X$ such that $||T_n(x)||$ is an unbounded sequence. 
    
\end{tm}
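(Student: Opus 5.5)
This is the standard Banach--Steinhaus theorem, quoted from \cite{folland}, so I will not reprove it from scratch. The plan is to derive it from the Baire category theorem, which is the usual route. I argue by contraposition: assume $\sup_n \|T_n x\| < \infty$ for every $x \in X$, and show that $\sup_n \|T_n\| < \infty$.

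The first step is to build closed sets. For $k \in \Bbb{N}$ set
$$E_k = \{x \in X : \|T_n x\| \le k \text{ for all } n\} = \bigcap_n T_n^{-1}\big(\overline{B_Y(0,k)}\big).$$
Each $E_k$ is closed, since each $T_n$ is continuous. The pointwise boundedness hypothesis says exactly that $X = \bigcup_k E_k$.

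The second step uses completeness. Because $X$ is a Banach space, the Baire category theorem gives some $E_k$ with nonempty interior. So there are $x_0 \in X$ and $r > 0$ with $\overline{B(x_0,r)} \subset E_k$.

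The third step converts this into a uniform bound. Take any $x$ with $\|x\| \le r$. Then both $x_0 + x$ and $x_0$ lie in $E_k$, so for every $n$,
$$\|T_n x\| \le \|T_n(x_0+x)\| + \|T_n x_0\| \le 2k.$$
Rescaling gives $\|T_n\| \le 2k/r$ for all $n$. This contradicts the assumption that $\|T_n\|$ is unbounded.

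Two remarks. The only genuine input is Baire's theorem, and completeness of $X$ is essential there; no completeness of $Y$ is needed. The conclusion is the existence of some $x$ with $\sup_n \|T_n x\| = \infty$. That is exactly the statement that $\|T_n x\|$ is an unbounded sequence, so no further extraction of a subsequence is required.
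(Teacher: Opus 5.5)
Your proof is correct. It is the standard Baire category argument for the uniform boundedness principle, essentially the proof in the Folland reference that the paper cites; the paper gives no proof of its own.
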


\begin{proof}[Proof of Theorem 1.1]
Assume Theorem 1.2. Let $\Omega$ be a simply connected domain in the complex plane, and suppose $\Omega$ is unbounded in its interior path metric. Let $X = \{f \in \mathcal{O}(\Omega) : |f'| \leq 1\}$. Let $p,q \in \Omega$. Let $X_p := \{f \in \mathcal{O}(\Omega): f(p) = 0, |f'| <  \infty\}$. $X_p$ is a Banach space upon taking the norm to be the $L^\infty$-norm applied to $f'$, $f \in X_p$. More precisely, $||f||_{X_p} := ||f'||_\infty$. By the fact that any function in $X$ may be translated without changing $|f(p) - f(q)|$, without loss of generality it is true that  $X(p, q) = \sup_{f \in X_p, ||f||_{X_p} \leq 1} |f(q)|$.  Define the evaluation operator $\Lambda_q (f) := f(q)$. We observe that the operator norm $||\Lambda_q||_{X_p^*} = X(p,q)$ is finite, and so $\Lambda_q \in X_p^*$. Since $\Omega$ is unbounded in its interior path metric, choose a sequence of $q_n \in \Omega$ so that $d_\Omega(p, q_n) \rightarrow \infty$ as $n \rightarrow \infty$. By Theorem 1.2, we have that $cd_\Omega(p,q) \leq X(p,q) \leq d_\Omega(p,q)$ for all $q \in \Omega$, and so $X(p, q_n) \rightarrow \infty$ as $n \rightarrow \infty$. This implies that $||\Lambda_{q_n}||_{X_p^*} \rightarrow \infty$ as $n \rightarrow \infty$. Theorem 2.1 now asserts that there is $f \in X_p$ so that $|\Lambda_{q_n}(f)| \rightarrow \infty$ as $n \rightarrow \infty$, but this just means that $f$ grows to infinity along $q_n$ and is therefore unbounded. However, $f$ belongs to $X_p$, and so has bounded derivative. Therefore $\Omega$ supports an unbounded analytic function with bounded derivative.

\bigskip

\noindent Suppose now, still assuming Theorem 1.2, that $\Omega$ supports an unbounded analytic function with bounded derivative. Call this function $f$. We may assume $f(p) = 0$ for some $p \in \Omega$ without loss of generality. Further assume that $f$ has been scaled so that $f$ is in the unit ball of $X_p$. Let $q_n \in \Omega$ be such that $f(q_n) \rightarrow \infty$ as $n \rightarrow \infty$. By the above, $X(p, q_n) \rightarrow \infty$ since $|f(q_n)| \leq  ||\Lambda_{q_n}||_{X_p^*} = X(p, q_n)$. By Theorem 1.2, $d_\Omega(p, q_n) \rightarrow \infty$ as $n \rightarrow \infty$, and so $\Omega$ is unbounded in diameter for its interior path metric. 

\end{proof}

\section{Proof of Theorem 1.2}

\noindent We prove that there is a $c > 0$ such that on any simply connected domain $\Omega \subset \comp$,  and for all $p,q \in \Omega$

    $$cd_\Omega(p,q) \leq X(p,q) \leq d_\Omega(p,q)$$
    
    \noindent The proof is broken into a number of pieces for easier reading. To begin, we prove 

\begin{lm}
    $$X(p,q) \leq d_\Omega(p,q)$$
\end{lm}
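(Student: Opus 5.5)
The plan is to integrate $f'$ along an almost-minimizing path in $\Omega$ and apply the fundamental theorem of calculus for contour integrals. Fix $f \in X$, so $f$ is analytic on $\Omega$ with $|f'| \leq 1$ everywhere, and fix $p,q \in \Omega$. Since $\Omega$ is a domain it is path connected. Any two of its points are joined by a polygonal path inside $\Omega$, which is rectifiable, so $d_\Omega(p,q)$ is a finite infimum over a nonempty family.

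Let $\epsilon > 0$ and choose a rectifiable curve $\gamma : [0,1] \to \Omega$ with $\gamma(0) = p$, $\gamma(1) = q$ and length $L(\gamma) \leq d_\Omega(p,q) + \epsilon$. Because $f$ is analytic on an open set containing the trace of $\gamma$, $f'$ is continuous there and $f$ is a primitive of $f'$. The fundamental theorem of calculus for contour integrals along rectifiable curves therefore gives
$$f(q) - f(p) = \int_\gamma f'(z)\,dz.$$
The standard estimate $\left|\int_\gamma g\,dz\right| \leq \sup_{\gamma}|g| \cdot L(\gamma)$ then yields
$$|f(q)-f(p)| \leq L(\gamma) \leq d_\Omega(p,q) + \epsilon.$$

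Letting $\epsilon \to 0$ gives $|f(p)-f(q)| \leq d_\Omega(p,q)$ for every $f \in X$, and taking the supremum over $f \in X$ gives $X(p,q) \leq d_\Omega(p,q)$.

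The only point needing any care is the use of the fundamental theorem of calculus for merely rectifiable, rather than piecewise $C^1$, curves. If we prefer to avoid it, there are two standard workarounds. One is to note that $d_\Omega(p,q)$ may equivalently be computed as an infimum over polygonal paths in $\Omega$: inscribe a polygon with vertices on $\gamma$ and fine enough mesh that its segments stay within a compact neighbourhood of the trace of $\gamma$ inside $\Omega$. Its length does not exceed $L(\gamma)$. The other is to approximate $\int_\gamma f'\,dz$ by Riemann–Stieltjes sums, which telescope to $f(q)-f(p)$ in the limit by uniform continuity of $f$ and $f'$ on a compact neighbourhood of the trace. Either way the main obstacle is only this technical regularity point; there is no genuine difficulty, and simple connectivity is not needed for this inequality.
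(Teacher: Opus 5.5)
Your proof is correct and follows the paper's argument: integrate $f'$ along a rectifiable path in $\Omega$ from $p$ to $q$, bound the integral by the path's length using $|f'|\leq 1$, then take the infimum over paths and the supremum over $f\in X$. Your $\epsilon$-near-minimizing path and your remarks on the fundamental theorem of calculus for merely rectifiable curves only make explicit details the paper leaves implicit.
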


\begin{proof}
    The proof is as follows. Let $X = \{f \in \mathcal{O}(\Omega) : |f'| \leq 1\}$, and let $f \in X$. Let $\gamma$ be any rectifiable arc in $\Omega$ joining $p$ to $q$. By the Fundamental Theorem of Calculus, we have

    $$|f(p) - f(q)| = |\int_\gamma f'(w) dw| \leq \int_\gamma |f'(w)| |dw| \leq \int |dw| = \ell(\gamma)$$

    \noindent Taking an infimum over all such $\gamma$, we get that $|f(p) - f(q)| \leq d_\Omega(p,q)$. Since this is true for all $f \in X$, taking a supremum yields Lemma 3.1.

\end{proof}

    \begin{lm} Let $p, q \in \Omega$, and let $S = \{w \in \comp : |\Imag(w)| < 1\}$. Let $\psi: S \rightarrow \Omega$ be a Riemann mapping normalized so that $\psi(a) = p, \psi(b) = q$, $a, b \in \Bbb{R}$, and $a < b$. Then
         $$\int_a^b |\psi'| \geq d_\Omega(p, q)$$
    \end{lm}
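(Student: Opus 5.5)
The plan is simply to exhibit a specific rectifiable curve in $\Omega$ from $p$ to $q$ whose length equals $\int_a^b |\psi'|$. The infimum defining $d_\Omega(p,q)$ is then at most this number.

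Consider the image under $\psi$ of the real segment $[a,b] \subset S$, that is, the curve $\gamma(t) = \psi(t)$ for $t \in [a,b]$. Since $[a,b]$ lies in the strip $S$ (because $\Imag(t) = 0$), the curve $\gamma$ lies entirely in $\psi(S) = \Omega$. It starts at $\psi(a) = p$ and ends at $\psi(b) = q$. The map $\psi$ is analytic on $S$, so $\psi'$ is continuous on the compact segment $[a,b]$. Hence $\gamma$ is a $C^1$ curve, and its length is
$$\ell(\gamma) = \int_a^b |\gamma'(t)|\,dt = \int_a^b |\psi'(t)|\,dt < \infty .$$
In particular $\gamma$ is rectifiable, which makes it admissible in the definition of the interior path metric.

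By that definition, $d_\Omega(p,q) \le \ell(\gamma) = \int_a^b |\psi'|$, which is the claim. No earlier result is needed beyond the definition of $d_\Omega$. Injectivity of $\psi$ plays no role, and the normalization with real preimages $a<b$ serves only to specify the segment of integration.

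There is no real obstacle here. The only point to check is that $\int_a^b |\psi'|$ is the standard arclength integral of a smooth parametrized curve. This holds because $\psi$ is holomorphic on a neighborhood of $[a,b]$, so $t \mapsto \psi(t)$ is continuously differentiable with derivative $\psi'(t)$. The substance of this lemma presumably lies in how it will be used later: a Bloch-type function built on the strip will give a lower bound for $X(p,q)$ of order $\int_a^b |\psi'|$, and this lemma then converts that into the comparison with $d_\Omega(p,q)$.
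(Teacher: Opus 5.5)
Your proposal is correct and is essentially the paper's own argument: the image $\psi([a,b])$ is a path in $\Omega$ from $p$ to $q$ whose length is $\int_a^b |\psi'|$, so the infimum defining $d_\Omega(p,q)$ is at most this quantity. Your added checks, that $t\mapsto\psi(t)$ is $C^1$ on the segment (hence rectifiable) and that injectivity is not needed, make explicit what the paper leaves implicit.
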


    \begin{proof}To prove this, we have that $\psi([a, b]) := \gamma$ a path in $\Omega$ connecting $p$ to $q$. The integral on the left hand side represents the length of $\gamma$. Since $d_\Omega(p,q)$ is the infimal length of all such paths, it is less than the length of $\gamma$ and the proof is complete. 
    \end{proof}

    \bigskip

    \noindent We now proceed to arrive at some estimates on the derivatives of $\psi$. Since $\psi$ is univalent, we let $g = \log(\psi') = A + iB$. We now apply some standard distortion estimates to $\psi$ (these amount to the fact that $g$ is Bloch):
    
    \begin{lm}
        The harmonic function $B(z)$ considered as a smooth function $B(x)$ on $\Bbb{R}$ satisfies $|\partial_xB(x)| \leq 4$, $|\partial^2_xB(x)| \leq 16$ on $\Bbb{R}$, and $|B(x)| \leq |B(0)| + 4|x|$ for all $x \in \Bbb{R}$.
    \end{lm}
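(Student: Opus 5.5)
Since $\psi$ is univalent on the strip $S$, the function $g = \log \psi' = A + iB$ is analytic on $S$, and $g' = \psi''/\psi'$. The plan is to transfer the classical Koebe distortion estimate for univalent maps of the disc to the strip, via disc automorphisms, at each real point $x$. Fix $x \in \Bbb{R}$. The disc $D(x,1) = \{w : |w - x| < 1\}$ lies in $S$, so $F(\zeta) = \psi(x + \zeta)$ is univalent on $\Bbb{D}$. The standard estimate $|F''(0)/F'(0)| \leq 4$ (Bieberbach's coefficient bound $|a_2| \leq 2$ applied to the normalized map $(F - F(0))/F'(0)$) then gives $|g'(x)| \leq 4$. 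Since $\partial_x B = \Imag(g')$ on the real line by the Cauchy--Riemann equations, we get $|\partial_x B(x)| \leq 4$. Integrating from $0$ to $x$ yields $|B(x) - B(0)| \leq 4|x|$, which gives the third assertion.

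For the second derivative, $\partial_x^2 B = \Imag(g'')$ on $\Bbb{R}$, so it suffices to show $|g''(x)| \leq 16$. Here I will use the Bloch-type bound on the full disc $D(x,1)$. For $\zeta \in \Bbb{D}$, apply the Koebe bound to $F \circ \phi_\zeta$, where $\phi_\zeta$ is the disc automorphism sending $0$ to $\zeta$. This gives the standard inequality
$$\left|(1-|\zeta|^2)\frac{F''(\zeta)}{F'(\zeta)} - 2\bar\zeta\right| \leq 4,$$
and hence $|g'(x+\zeta)| \leq 6/(1-|\zeta|^2)$ for $|\zeta|<1$.

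Cauchy's estimate on the circle $|\zeta| = r$ then gives $|g''(x)| \leq 6/(r(1-r^2))$. Optimizing at $r = 1/\sqrt{3}$ gives about $15.6$, which is at most $16$, as required.

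The main obstacle is this constant. A naive Cauchy estimate using the bound $6/(1-r^2)$ only just fits under $16$, so the optimization over $r$ must be done with care. If it turns out too tight, I would fall back on the sharper disc-automorphism argument: expand the normalized map around $0$, use the coefficient bound $|a_3| \leq 3$ together with $|a_2| \leq 2$, and compute $(F''/F')'(0) = 6a_3 - 4a_2^2$. This gives $|g''(x)| \leq 18 + 16$, which is weaker, so the Cauchy route is preferable. A cleaner alternative is to apply Cauchy on a disc centered at $x$ with radius $r$, using the better bound for $|g'|$ that comes from a larger univalent disc inside the strip. Since the strip allows radius $1$ only, the choice $r = 1/\sqrt{3}$ is the one I would commit to.
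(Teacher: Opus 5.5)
Your proof is correct. The bound $|\partial_x B|\le 4$ and the linear growth of $B$ are obtained exactly as in the paper: Bieberbach's $|a_2|\le 2$ at the center of $D(x,1)$, then integration along $\Bbb{R}$.

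The difference is in how you get $|g''(x)|\le 16$. The paper applies the same center-of-disc Koebe estimate at every $w$ with $|\Imag w|\le \frac12$, using $D(w,\frac12)\subset S$, to get $|g'|\le 8$ there. It then uses Cauchy's estimate on $|\zeta-x|=\frac12$, which gives exactly $16$, the minimum of $4/(r(1-r))$. You instead stay on the single disc $D(x,1)$ and use the M\"obius-invariant form of Koebe's estimate, obtained by composing with disc automorphisms. This gives $|g'(x+\zeta)|\le (4+2|\zeta|)/(1-|\zeta|^2)$, and Cauchy on $|\zeta|=1/\sqrt3$ then gives $9\sqrt3\approx 15.6$.

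The two routes trade off as follows. The paper's uses nothing beyond $|a_2|\le 2$ and rescaling, but lands on the stated constant with no slack. Yours costs one extra standard step but extracts more from the same disc, since $(4+2r)/(1-r^2)<4/(1-r)$ for $0<r<1$. If you keep $4+2r$ rather than rounding it up to $6$, you get roughly $13.4$ at $r=1/\sqrt3$, so your worry that the constant is tight is unfounded.

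Two side remarks. The ``cleaner alternative'' you dismiss at the end, recentering Koebe at each point of the circle, is precisely the paper's argument, and it works with $r=\frac12$. The $|a_3|\le 3$ fallback is not needed.
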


        \begin{proof} Let $x \in \Bbb{R}$. $\psi$ is univalent on $D(x, 1) \subset S$ (discs are considered open unless otherwise specified). Therefore, we may consider $\psi |_D$ as a univalent function on a disc centered at a fixed $x \in \Bbb{R}$ with power series (we have translated the function so $\psi(x) = 0$, 
    
    $$\frac{\psi|_D(z)}{\psi|_D'(x)} = (z - x) + a_2(z - x)^2 + ...$$

    \noindent we therefore have

    $$|\frac{\psi|_D''(x)}{\psi|_D'(x)}| = 2|a_2| \leq 4$$

    \noindent by the Koebe Theorem. Note that this implies $|g'(x)| \leq 4$, since $g'(x) = \psi|_D''(x)/\psi|_D'(x) = \psi''(x) / \psi'(x)$ there. Furthermore, for $w \in S$ with $|\Imag(w)| \leq \frac{1}{2}$ we have that $D(w, \frac{1}{2}) \subset S$ as well, so the rescaled Koebe Theorem implies that $|g'(w)| \leq \frac{4}{1/2} = 8$ uniformly in this smaller strip. In particular, for $x \in \Bbb{R}$ we may now apply for the first derivative of $g'$ (the second derivative of $g$) the classical Cauchy derivative estimate for the circle $|\zeta - x| \leq \frac{1}{2}$ to obtain

    $$|g''(x)| \leq \frac{8}{1/2} = 16$$

    \noindent Since $g$, $g'$ and $g''$ are analytic, the distortion on tangent spaces given by $g$ and $g'$ respectively is uniform in all directions. Therefore, considering $g(x)$ as a complex-valued function on $\Bbb{R}$, we have $|\partial_x g(x)| \leq 4$ and $|\partial^2_x g(x)| \leq 16$ on $\Bbb{R}$. By the triangle inequality we therefore have that $|\partial_xB(x)| \leq 4$ and $|\partial^2_xB(x)| \leq 16$ on $\Bbb{R}$. $B$ is harmonic on $S$ and so its restriction to $\Bbb{R}$ is smooth. By the triangle inequality,the Fundamental Theorem of Calculus,

$$|B(x)| = |B(x) - B(0) + B(0)| \leq |\int_0^x (\partial_x B)(a) da| + |B(0)|$$

\noindent Thus, the partial derivative estimate above yields

$$|B(x)|  \leq 4|\int_0^x  da| + |B(0)| = |B(0)| + 4|x|,$$
 
\noindent  completing the proof of Lemma 3.3. 
\end{proof}

    \bigskip

  \begin{proof} We now use these lemmas to complete the proof of Theorem 1.1. Lemma 3.3 and convolution with an approximate identity are used to $``$straighten" the argument of $\psi$ on $S$ in order to eliminate cancellation of the argument that may appear in taking a primitive for $\psi^{-1}$ on $\Omega$. In particular, we will approximate $B(x)$ along the real axis by the restriction of an analytic function. Below, $B'(x)$ denotes $\partial_x B(x)$. With $\sigma = 1/3$, consider the entire function

    $$k(w) = \frac{1}{\sigma \sqrt{2\pi}}\exp({-\frac{w^2}{2\sigma^2}})$$

    \noindent Furthermore, define (formally for now) 

    $$G(w) := \int_{-\infty}^\infty B(x)k(w - x)dx$$

    \noindent By the fact that $B(x)$ grows at most linearly (Lemma 3.3), $|B(x)| \leq |B(0)| + 4|x|$, and $k$ decays exponentially a textbook combination of Fubini's Theorem and Morera's Theorem (for a very general statement see \cite{mattner}) proves that $G(w)$ indeed defines an entire function which may be differentiated under the integral sign. Moreover, $G$ is real on $\Bbb{R}$ and well-approximates $B(x)$ there. Precisely, we have for $w = a + ib$, $x = a - s$: 

    $$|G(a) - B(a)| = |\int_{-\infty}^\infty B(a - s)k(s)ds - \int_{-\infty}^\infty (B(a) + sB'(a)) k(s)ds |$$

    \noindent since $k(s)$ integrates to $1$ over the real axis and $sk(s)$ integrates to zero over the real axis. This is because $sk(s)$ is the product of an even function $k(s)$ and an odd function $s$ so is odd. Therefore

    $$|G(a) - B(a)| = |\int_{-\infty}^\infty (B(a - s) - B(a) - sB'(a))k(s) ds|$$

    \noindent We now use Taylor's Theorem to see that

    $$|B(a -s) - B(a) - sB'(a)| \leq \frac{s^2}{2}|B''(a)|$$

    \noindent In particular, Lemma 3.3 tells us that $|B''(a)| \leq 16$, so have that 

    $$|G(a) - B(a)| \leq  \int_{-\infty}^\infty |(B(a - s) - B(a) - sB'(a))k(s)|ds \leq \int_{-\infty}^\infty 8 s^2 k(s)ds = 4\sigma^2 < \pi/3$$

    \noindent So, in particular, $G(a)$ is real on the real axis and approximates the argument of $\psi'$ there.  Furthermore, since
    
    $$\int_{-\infty}^\infty \Imag(k(a + ib))da = 0$$

    \noindent for all $b$, and Lemma 3.3 tells that  $|B(a - s) - B(a)| \leq 4|s|$, we have for all $w = a+bi \in S$:

    $$|\Imag(G(w))| = |\int_{-\infty}^\infty (B(a - s) - B(a)) \Imag(k(s + ib))ds|$$

    \noindent which is less than or equal to

     $$\int_{-\infty}^\infty |(B(a - s) - B(a))| |\Imag(k(s + ib))|ds$$

    \noindent which in turn is less than or equal to 

    $$\int_{-\infty}^\infty 4|s| |\Imag(k(s + ib))|ds$$

    \noindent We now estimate from above $|\Imag(k(s + ib))|$ for $s +ib \in S$:

    $$|\Imag (k(s + ib))| = |-\frac{1}{\sigma \sqrt{2\pi}} \exp(\frac{b^2 - s^2}{2\sigma^2}) \exp(\frac{-isb}{\sigma^2})| \leq |\frac{1}{\sigma\sqrt{2\pi}} \exp(\frac{1}{2\sigma^2}) \exp({\frac{-s^2}{2\sigma^2}})| $$

    \noindent In particular, the exponential decay absorbs the linear growth coming from the $B$ terms as follows:

    $$\int_{-\infty}^\infty 4|s| |\Imag(k(s + ib))|ds \leq \int_{-\infty}^\infty4|s| |\frac{1}{\sigma\sqrt{2\pi}} \exp(\frac{1}{2\sigma^2}) \exp({\frac{-s^2}{2\sigma^2}})| $$

    \noindent which is equal to

    $$\frac{4}{\sigma\sqrt{2\pi}} \exp({\frac{1}{2\sigma^2}}) \int_{-\infty}^\infty |s| \exp({\frac{-s^2}{2\sigma^2}}).$$

    \noindent Next, we realize that the integrand is even, and so 

    $$\int_{-\infty}^\infty |s| \exp({\frac{-s^2}{2\sigma^2}}) = 2\int_0^\infty s \exp({\frac{-s^2}{2\sigma^2}}) = 2\sigma^2$$

    \noindent by explicit anti-differentiation as $\frac{d}{ds}(-\sigma^2 \exp({\frac{-s^2}{2\sigma^2}})) = s\exp({\frac{-s^2}{2\sigma^2}}).$ Therefore we have

    $$|\Imag(G(w))| \leq \lambda := \frac{4}{\sigma\sqrt{2\pi}} (2\sigma^2)  (e^{\frac{1}{2\sigma^2}}) \approx 95.76$$

    \noindent for an absolute $\lambda> 0$ depending only on $\sigma$. We may now construct $f$ analytic on $\Omega$ for which $|f(p) - f(q)| \geq \frac{1}{2} e^{-\lambda}d_\Omega(p,q)$ completing the proof of the lower bound with constant $c = \frac{1}{2}e^{-\lambda} \approx 1.29 \times 10^{-42}$. To do so, on $S$ let $h(z) := \psi'(z) e^{-\lambda - iG(z)}$. We have that $h(z)$ is analytic because linear combinations and compositions of analytic functions are analytic. Let

    $$F(w) := \int_a^w h(\zeta) d\zeta$$

    \noindent and let

    $$f :=F \circ \psi^{-1}$$

    \noindent on $\Omega$. We have that $|h| = |\psi'|e^{-\lambda + \Imag(G)} \leq |\psi'|$ on $S$ because $|\Imag(G)| \leq \lambda$ there. This yields a derivative bound on $f$, using the standard expression for the derivative of the inverse of a univalent map as the reciprocal of the derivative of the original map:

    $$f' = \frac{h}{\psi'} \circ \psi^{-1}$$  
    
    \noindent and so
    
    $$|f'| \leq 1$$

    \noindent In particular, $f \in X$. Given $x \in \Bbb{R}$, $G(x)$ is real, and furthermore $\psi' = |\psi'|e^{iB}$, so

    $$h(x) = |\psi'|e^{-\lambda}e^{i(B(x) - G(x))}$$

    \noindent Now recall that $|B(x) - G(x)| \leq \frac{\pi}{3}$. Since the argument is constrained to this cone, we have $\Real(h(x)) \geq \frac{1}{2}|h(x)| = \frac{1}{2}e^{-\lambda}|\psi'(x)| \geq 0$ on $\Bbb{R}$. Since $\psi(a) = p$ and $\psi(b) = q$

    $$|f(p) - f(q)| = |F(a) - F(b)| = |\int_a^b h(z) dz| \geq |\Real\int_a^bh(z) dz| = |\int_a^b \Real(h(z))dz| $$

    \noindent Now, $\Real(h(x)) \geq 0$ pointwise on $[a,b]$. Therefore, we have

    $$|f(p) - f(q)| \geq  \int_a^b \Real(h(x))dx \geq \frac{1}{2}e^{-\lambda} \int_a^b |\psi'(x)|dx \geq \frac{1}{2}e^{-\lambda}d_\Omega(p,q)$$

    \noindent because of Lemma 3.2. Theorem 1.1 is therefore proved with $c = \frac{1}{2}e^{-\lambda} \approx 1.29 \times 10^{-42}$. 

    \end{proof}

\section{Proof of Theorem 1.3}

\noindent We construct an example of a simply connected domain in $Rubel_0(1)$ which is not in $Rubel_0(3)$. In particular, $Rubel(1) \not\subset Rubel(3)$.

\begin{proof}
   We have demonstrated above that $Rubel_0(1)$ is precisely the set of simply connected domains which are bounded in diameter for their interior path metric. Thus, it suffices to find a simply connected domain $\Omega$ with bounded diameter in its interior path metric and an unbounded analytic function $f$ on $\Omega$ so that along no sequence do all of $f, f', f'', f'''$ tend to infinity  in modulus simultaneously. 

    \bigskip

    \noindent It suffices to construct an unbounded smooth function $f: (0,1) \rightarrow \Bbb{R}$ with $\min(|f|, |\partial_x^{(3)}f|)$ uniformly bounded on $(0,1)$ and to approximate that function in the $C^3$ topology by a real analytic function which continues analytically to a simply connected neighborhood of $(0,1)$ which is bounded in diameter for its interior path metric. Because the resulting analytic map and all of its derivatives have conformal linear derivative on each tangent space, $|\partial_x^{(3)}f| = |f'''|$, and so this construction will yield an unbounded analytic function $f$ on a simply connected neighborhood of $(0,1)$ which is bounded in diameter for its interior path metric such that $\min(|f|, |f'''|)$ is uniformly bounded. The last statement is because the neighborhood onto which we analytically continue can be chosen as small as we wish, and these quantities vary continuously. 

    \begin{figure}[ht]
    \centering
    \def\svgwidth{.6\textwidth}
    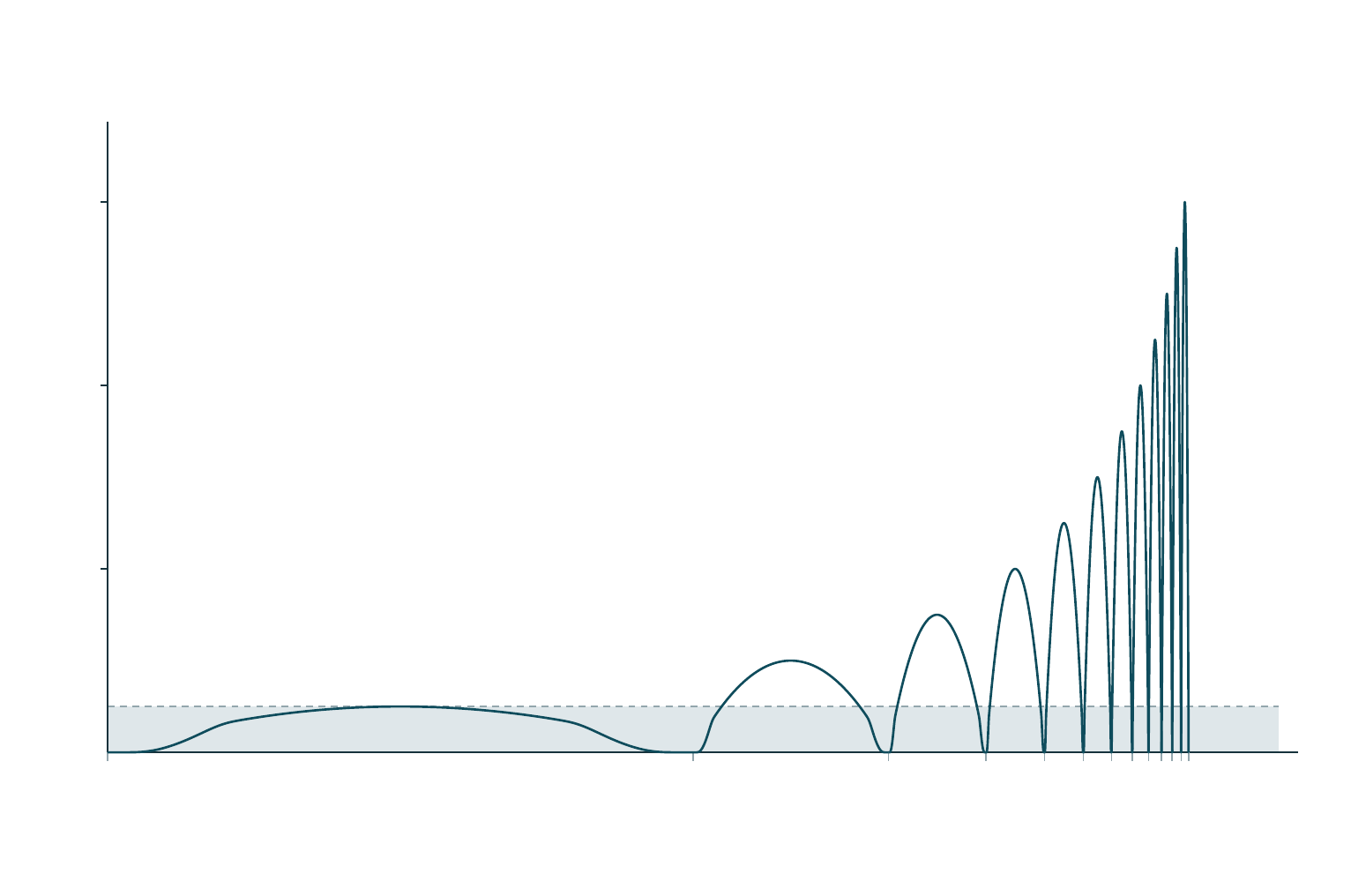

    \caption{The Function $f(x)$}
    \label{fig:circles}
\end{figure}

    \noindent Tile $[0,1)$ by $I_j = [t_j, t_{j+1}] := [1 - \frac{1}{j}, 1 - \frac{1}{j+1}]$, $j \geq 1$. Let $u_j$ be the unique parabola on $I_j$ vanishing at the endpoints with maximum value $1$ on $u_j$ so that $\partial_x^{(3)}u_j \equiv 0$ on the interior of $I_j$, i.e.
    
    $$u_j(x) = c_j(x - t_j)(t_{j+1} - x)$$
    
   \noindent  Let $\chi \in C^\infty(\Bbb{R})$ be a smooth function identically zero on $(-\infty, 0]$, identically $1$ on $[1, \infty)$, $0 \leq \chi \leq 1$ on $[0,1]$. Let $\delta_j > 0$ be small, and let 

   $$f(x) = ju_j(x)\chi\big(\frac{x - t_j}{\delta_j}\big)\chi\big(\frac{t_{j+1} - x}{\delta_j}\big)$$

   \noindent for $x \in I_j$. This function is smooth on the interior of each $I_j$ since $f(x)$ is a product of three smooth functions and a constant there. Moreover, $f$ may be extended to a smooth function on all of $(0,1)$ such that $f$ is identically $0$ in a neighborhood of each endpoint $t_j$. This is because on $I_j$, $f$ decays to zero before reaching either endpoint because of the two $\chi$ factors (see Figure 3). $f$ is clearly unbounded, as the maximum of $f(x)$ on $I_j$ is $j$. Moreover on $[t_j + \delta_j, t_{j+1} - \delta_j]$, we have by definition of $\chi$ that $|\partial_x^{(3)}f(x)| \equiv 0$ as $f(x) = ju_j(x)$ there, a constant times a quadratic. On a collar such as $[t_j, t_j + \delta_j]$, we observe that by the behavior of $\chi$ in $[0,1]$ and the fact that $u_j$ decays to zero monotonically  on $I_j$ as it approaches an endpoint,  we could choose $\delta_j$ small enough so that $|f(x)| \leq 1$ there. Hence, on $(0,1)$ we have $\min(|f(a)|, |\partial_x^{(3)}f(a)|) \leq 1$. 


    \bigskip

   \noindent We now leverage an approximation argument due to Whitney. Lemma 6 in \cite{whit} states that real analytic functions on $(0,1)$ can approximate a smooth function in the following sense: given some $\epsilon > 0$ and $f$ smooth on $(0,1)$, there exists some real analytic $g_\epsilon \in \omega((0,1))$ so that 

   $$|\partial_x^{(k)}g_\epsilon(x) - \partial_x^{(k)}f(x)| < \epsilon$$

   \noindent for all $0 \leq k \leq 3$. In particular, given $f$ as above, we may find $g$ real analytic on $(0,1)$ such that $\min(|g|, |\partial_x^{(3)}(g)|) \leq C_\epsilon$ for some constant $C_\epsilon$ depending only on some prior chosen $\epsilon$ very small and arbitrary. Given that $g$ is real analytic, it may be  analytically continued into a small open neighborhood of $(0,1)$. Without loss of generality we may choose this domain to be a cusp domain (see Figure 4) of the form $\{x + iy : x \in (0,1), |y| \leq \rho(x), \lim_{x \rightarrow 0}\rho(x) = 0, \lim_{x \rightarrow 1} \rho(x) = 0 \}$, for some $\rho(x)$ continuous on $(0,1)$. Without further loss of generality $\rho(x)$ may be chosen to be uniformly close to zero in order to guarantee that $|g(x + iy) - g(x)| \leq \delta$ and $|\partial_x^{(3)}(x + iy) - \partial_x^{(3)}g(x)| \leq \delta$ for some $\delta$ small. By our remark above, $g$ has conformal linear derivative in each tangent space and we are done.

\end{proof}

   \begin{figure}[ht]
    \centering
    \def\svgwidth{.6\textwidth}
    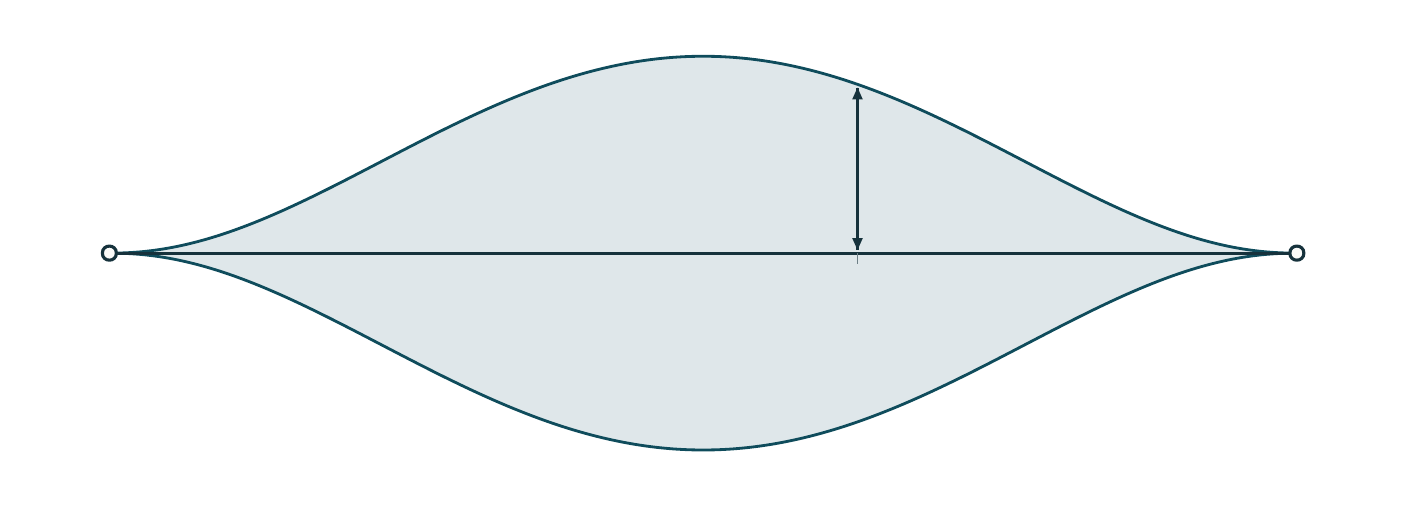

    \caption{A Cusp Domain}
    \label{fig:circles}
\end{figure}

\noindent Some remarks are in order. First note that the cusp shape of the domain is necessary for the argument. If $g$ could be continued to some open neighborhood of $[0,1]$, $g$ could be restricted to be analytic on a quasidisc. Hinchliffe \cite{hinch} proved that all quasidiscs have type $Rubel_0(\infty)$. Therefore, permitted cusps are not restrictions of neighborhoods of $[0,1]$ nor are they asymptotic to cones. It is an interesting question to determine how regular we may permit the cusps to be. May they, for instance, be asymptotic to the cusp formed by two tangent circles? Second, it is likely that the analytic approximant above can be written out explicitly, without appeal to Whitney's Theorem. This may yield more information on the geometry of the necessary cusps. See Section 6.  

\section{Proof of Theorem 1.4}

 We prove that $Rubel(1) = Rubel(2)$. In particular, $Rubel_0(1) = Rubel_0(2)$. We begin by proving the following theorem:

\begin{tm}
    Let $\Gamma \subset \comp$ be a smooth oriented Jordan ray with one endpoint, call it $o \in \comp$. Let $\{x_n\}_{n \in \Bbb{N}} \in \Gamma$ be a sequence of points so that $x_n, x_{n+1}$ are arc-length distance at most $\Delta$ apart along $\Gamma$, $x_0 = o$. Let $M  > 1$, and suppose $f$ is a smooth real valued function on $\Gamma$ such that $f(x_n) \geq M^n$, $f(o) = M$. There exists a sequence $\{y_n\}_{n \in \Bbb{N}} \subset \Gamma$ such that $\min(|f(y_n)|, |f'(y_n)|, |f''(y_n)|) \rightarrow \infty$ as $n \rightarrow \infty$. 
\end{tm}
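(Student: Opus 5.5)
Parametrize $\Gamma$ by arc length from $o$, $\gamma:[0,\infty)\to\Gamma$ with $\gamma(0)=o$, and regard $f$ as a smooth real function $f(t)$ on $[0,\infty)$, where $'$ denotes $d/dt$. Let $s_n$ be the arc-length parameter of $x_n$. Then $s_0=0$ and $|s_{n+1}-s_n|\le\Delta$, so $s_n\le n\Delta$ and $f(s_n)\ge M^n$. The goal is a sequence $t_n\to\infty$ along which $f$, $f'$ and $f''$ all tend to $+\infty$. The model is Gordon's inductive argument, but only two derivatives are needed, so it can be done by hand with the mean value theorem.

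\textbf{Step 1: find points where $f'$ is huge and $f$ is large.} Fix large $n$ and work on $[s_{n-1},s_n]$, or more robustly on $[0,s_n]$. Set $T_n=\sup\{t\le s_n: f(t)\le M^{n}/2\}$ when this set is nonempty. On $[T_n,s_n]$ we have $f\ge M^n/2$. To get growth of $f$ with the right sign rather than cancellation, use that $f$ rises from at most $M^{n-1}$-type values to $M^n$. Precisely, pick $n$ with $f(s_{n-1})$ smaller than $f(s_n)/2$, or else exploit that $M^n/M^{n-1}=M>1$ and pass to a subsequence $m_n$ with $M^{m_n-k}$ negligible. Since $\max_{[0,s_{n-1}]}f$ might itself be huge, the clean version takes $t^*$ as the first time with $f(t^*)=M^n$ and $u$ as the last time before $t^*$ with $f(u)=M^n/2$. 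Then $f\in[M^n/2,M^n]$ on $[u,t^*]$, and $t^*-u\le t^*\le n\Delta$. By the MVT some $\tau\in[u,t^*]$ has
\[
f'(\tau)\ \ge\ \frac{M^n}{2n\Delta}\to\infty,\qquad f(\tau)\ge M^n/2 .
\]

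\textbf{Step 2: upgrade to $f''$.} Repeat the same trick with $f'$ in place of $f$ on the interval $[u,t^*]$. Since $f'(\tau)\ge M^n/(2n\Delta)$ while $f'$ at $u$ satisfies $f'(u)\ge 0$ (because $u$ is a last crossing upward), the natural step is to compare $f'$ at two points. However, $f'(u)$ could also be large, so the lower bound does not immediately transfer. Instead, choose the level for $f$ so that $f'$ is forced to grow. Split $[u,t^*]$ at the first point $v$ where $f\ge 3M^n/4$. On $[u,v]$ the MVT gives a point with $f'\ge M^n/(4n\Delta)$, and so does $[v,t^*]$. Better still, consider the first point $\sigma\ge u$ where $f'(\sigma)=L_n:=M^n/(8n\Delta)$, if $f'<L_n$ at $u$. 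Between $u$ and $\sigma$, $f$ increases by at most $L_n(\sigma-u)\le M^n/8$, so $f\ge M^n/2$ there. Now repeat the argument: take $\rho$ as the last point before $\sigma$ where $f'=L_n/2$. On $[\rho,\sigma]$, $f'\ge L_n/2$ and $f\ge M^n/2$, and the MVT gives $\eta$ with
\[
f''(\eta)\ge \frac{L_n/2}{\sigma-\rho}\ge\frac{L_n}{2n\Delta}\to\infty .
\]
Set $y_n=\gamma(\eta)$; then $f(y_n)$, $f'(y_n)$ and $f''(y_n)$ all tend to infinity.

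\textbf{Main obstacle.} The delicate case is when $f'(u)\ge L_n$ already, so that no such $\sigma$ exists. I would handle this by choosing $u$ instead as the last point before $t^*$ where $f$ equals a much smaller level, such as $M^{n/2}$. Then $f'$ must be small somewhere nearby: if $f'$ were at least $L_n$ on all of $[u,t^*]$, we could move backwards to find where $f'$ drops while $f$ stays above $M^{n/2}$, because $f$ is below $M^{n/2}$ just before $u$. The point is to pick levels $\ell_n\to\infty$ with $\ell_n\ll M^n$, such as $\ell_n=M^{n/2}$, so that each MVT application keeps $f\ge\ell_n$ and keeps the derivative thresholds divergent. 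Getting these level choices consistent is the hard step. The same-sign issue also needs care, since we want $f,f',f''\to+\infty$ and not merely large in modulus. The ray geometry enters only through the arc-length parametrization, since $|f'|$ on $\Gamma$ is the derivative in $t$.
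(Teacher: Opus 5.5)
Your Step~1 and the favourable case of Step~2 are sound, and they follow essentially the paper's route: first passage $t^*$ to $M^n$, last passage $u$ through a lower level, then a second mean-value argument on a window for $f'$. One small repair is needed there: $\rho$ lies in $[u,\sigma]$ only if $f'(u)\le L_n/2$, so split the cases at $L_n/2$ rather than at $L_n$.

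The genuine gap is the case $A:=f'(u)>L_n/2$, which you leave open, and the remedy you sketch cannot close it. Moving backwards from the last upcrossing $u$ of any level $\ell_n$ takes you into $\{f<\ell_n\}$, so wherever you find $f'$ dropping there, $f$ is below the level. Nor is the obstruction tied to the choice of level. Suppose $f$ plunges to near $0$ and then climbs a ramp of slope $A\gg L_n$ through every level up to $M^n$. Then $f'\approx A$ at the last upcrossing of every $\ell_n$, and behind $u$ the derivative changes only at the bottom of the dip, where $f\approx 0$. This is not an edge case. A sawtooth of tents of width $\Delta$ whose slopes grow like $M^j$ satisfies the hypotheses and lands in your delicate case at every scale. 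The paper's own proof passes over the same case: its IVT step producing $T_i$ with $f'(T_i)=M^{i-1}(M-1)/(2i\Delta)$ tacitly assumes $f'(\delta_i)$ is below that value.

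The missing idea is to look \emph{forward} from $u$ and use the exponential growth of the running maximum, $\max_{[0,t]}f\ge M^{\lfloor t/\Delta\rfloor}$, which follows from $f(s_m)\ge M^m$ and $s_m\le m\Delta$. Let $s>u$ be the first time with $f'(s)\notin(A/2,2A)$. On $[u,s]$ the function $f$ is increasing, so $f\ge M^n/2$ there, and $f\le M^n/2+2A(t-u)$. Since also $f<M^n$ on $[0,u]\subset[0,t^*)$, we get
\[
M^{\lfloor s/\Delta\rfloor}\le \max_{[0,s]}f\le M^n+2As .
\]
If $f'$ never left the window, this bound would hold for every time after $u$, which is impossible; so $s$ exists. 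Moreover $s-u\le s\le C_{M,\Delta}(n+\log A)\le C'\log A$, where the last step uses $A>M^n/(16n\Delta)$. The mean value theorem on $[u,s]$ then gives a point $\eta$ with
$f(\eta)\ge M^n/2$, $f'(\eta)\ge A/2$, and $|f''(\eta)|\ge (A/2)/(s-u)\gtrsim A/\log A$.
All three tend to infinity with $n$, and together with your favourable case this proves the statement.

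Here $f''(\eta)$ may be negative, namely when $f'$ leaves the window through $A/2$. You should drop the goal of $f''\to+\infty$. The statement asks only for moduli, and in the sawtooth example $f''>0$ occurs only near the bottoms of the teeth, where $f\approx 0$, so positivity cannot be forced.
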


\begin{proof}
 Passing to a subsequence, assume that the $x_i$ are ordered along $\Gamma$. We first find a sequence $\{z_n\}_{n \in \Bbb{N}} \subset \Gamma$ along which $f(z_n), f'(z_n) \rightarrow \infty$. Parameterizing $\Gamma$ by arc-length, let $t_n$ be the first time $f(t_n) = M^n$. The time $t_n$ must exist by the Intermediate Value Theorem, as $f(x_n) \geq M^n$. Moreover, for each $i \in\{1,...,n\}$ we have $t_i \in [x_1, x_n]$, an interval of length at most $n \Delta$. In particular, the distance between $t_{n-1}$ and $t_n$ is at most $n\Delta$. Let $i \geq 1$. Let $\delta_i \in [t_{i-1}, t_i]$ be the last time $f$ attains the value $M^{i - 1}$.  By the Mean Value Theorem, there exists $z_i \in [\delta_i, t_i] \subset [t_{i - 1}, t_i]$ such that 
 
 $$f'(z_i) = \frac{f(t_i) - f(\delta_i)}{t_{i} - t_{i - 1}} \geq \frac{M^i - M^{i - 1}}{i\Delta} = \frac{M^{i - 1} (M - 1)}{i\Delta} > 0$$

 \noindent  Moreover, since $z_i \in [\delta_i, t_i]$ and $\delta_i$ is the last time in $[t_{i-1}, t_i]$ that $f$ attains the value $M^{i - 1}$ we have that $f(z_i) \geq M^{i - 1}$ as $f$ must oscillate between $M^{i -1}$ and $M^i$ in $[\delta_i, t_i]$. We observe that $z_i$ and $z_{i - 1}$ are separated by an arc-length distance at most $2i\Delta$. 

 \bigskip

 \noindent By the Intermediate Value Theorem, there exists a first time $T_i \in [\delta_i, z_i]$ where $f(T_i) \geq M^{i - 1}$ and $f'(T_i) = \frac{M^{i - 1}(M-1)}{2i\Delta}$. We observe that the arc-length distance between $T_{i - 1}$ and $T_i$ is at most $4i\Delta$. Between $T_{i - 1}$ and $T_i$, since $f'$ oscillates between $f'(T_{i - 1}$ and $f'(T_i)$ and eventually attains $f'(T_i)$ at $T_i$, given $i$ sufficiently large there must be a first time $q_{i - 1}$ where $f'$ exits the strip

 $$|y - f'(T_{i - 1})| < \frac{f'(T_{i - 1})}{2}.$$

 \noindent Moreover, the distance between $q_{i - 1}$ and $T_{i - 1}$ is at most $4i\Delta$. By the Mean Value Theorem, there exists $y_i \in [T_{i - 1}, T_i]$ such that 

 $$|f''(y_i)| = \frac{|f'(q_{i - 1}) - f'(T_{i - 1})|}{|q_{i-1} - T_{i - 1}| } \geq \frac{f'(T_{i - 1})}{8i\Delta}.$$

 \noindent Since $f(y_i)$ is in the strip  $|f'(y_i) - f'(T_{i - 1})| < \frac{f'(T_{i - 1})}{2}$, we have that $f'(y_i) \geq \frac{f'(T_{i-1})}{2}$. Furthermore, $f'$ is positive and bounded away from zero on the strip, so $f$ is increasing on the strip and so $f(y_i) \geq f(T_{i - 1})$. Asymptotically $f$ and $f'$ grow exponentially fast and so the sequence $\{y_n\}_{n \in \Bbb{N}}$ has the desired properties.

\end{proof}

\noindent It is not possible to remove the absolute value sign from the second derivative in this argument. This is due to the fact that $f'$ may escape the strip described either by exiting above or by exiting below, and both can occur. We now prove Theorem 1.4 from Theorem 5.1, demonstrating that it was necessary to pass to the third derivative in Section 4:

\begin{proof}
    It suffices to prove that $Rubel(1) \subset Rubel(2)$. Let $\Omega$ be a domain in $Rubel(1)$, and let $f$ be an unbounded analytic function on it. Let $M > 1$, and let $\{x_n\}_{n \in \Bbb{N}} \subset \Omega$ be a sequence of points along which $|f|$ tends to infinity. Along such a sequence, either the real or imaginary part of $f$ must blow up. Let $u$ denote this real or imaginary part which tends to infinity. Without loss of generality $\{x_n\}_{n \in \Bbb{N}}$ may have been chosen such that $u(x_n) \geq M^n$ and $u(x_0) = M$. We remind the reader that $M$ was chosen arbitrarily, so $f$ unbounded implies that these conditions may be satisfied. Since $\Omega$ has bounded diameter for its interior path metric, the Fundamental Theorem of Calculus $``$easy direction" in the proof of Theorem 1.1 applies. This guarantees  $\Omega$ has finite diameter in its interior path metric even in the case $\Omega$ was multiply connected. Choose an upper bound $\Delta > 0$. By this boundedness property, we may join $x_n$ to $x_{n + 1}$ by a smooth path $\gamma_n \subset \Omega$ of arc-length less than or equal to $\Delta$. Let the path $\Gamma := \cup_{n \geq 0} \gamma_n$. Smooth all of the corners at a very small scale to obtain a smooth path $\Gamma'$. $u|_{\Gamma'}$ is a smooth function on $\Gamma'$. Notice now that $u|_{\Gamma'}$ and  $\{x_n\}_{n \in \Bbb{N}} \subset \Gamma'$ satisfy the hypotheses of Theorem 5.1, and so we may find a sequence $y_n \in \Gamma' \subset \Omega$ for which $\min(|u(y_n)|, |u'(y_n)|, |u''(y_n)|) \rightarrow \infty$ as $n \rightarrow \infty$. By the triangle inequality and the fact that the derivative of an analytic function uniformly expands or contracts a given tangent space in all directions $\min(|f(y_n)|, |f'(y_n)|, |f''(y_n)|) \rightarrow \infty$ as $n \rightarrow \infty$. This implies that $\Omega$ is in the set $Rubel(2)$ by definition.
\end{proof}

\noindent By the proof of Theorem 5.1, Theorem 1.4 is sharp as far as characterizing $Rubel_0(k)$ domains by their interior path diameter is concerned. 

\section{Open Questions}

\noindent We conclude with a list of open questions. Langley's question and its generalization remain open.

\begin{??}
Verify that $Rubel(3) = Rubel(4) = \cdots = Rubel(\infty)$ or find a further distinction. Do any of $Rubel(k)$ admit a clean geometric characterization, $k \in [3, \infty] \cap \Bbb{N}$?
\end{??}

\noindent Let $S \subset \Bbb{N}$. Let $Rubel(S)$ be those domains $\Omega \subset \comp$ such that for any unbounded analytic function $f \in \mathcal{O}(\Omega)$, there exists a sequence along which all of $f^{(k)}$ tend to infinity, $k \in S$.

\begin{??}
Is there a clean geometric description of $Rubel(S)$?
\end{??}

\noindent By Theorem 1.4 any simply connected domain which has bounded diameter for its interior path metric is $Rubel(\{k, k+1, k+2\})$ for any $k \in \Bbb{N}$, for if there were an unbounded analytic function with $f^{(k)}$ unbounded, there must be a sequence along which $f^{(k)}, f^{(k+1)}, f^{(k+2)}$ are all unbounded. Is this all that can be guaranteed?

\begin{??}
    Is there a $Rubel(2)$ domain $\Omega$ and an unbounded analytic function $f \in \mathcal{O}(\Omega)$ such that no more than three consecutive derivatives tend to infinity along any given sequence?
\end{??}

\noindent As was mentioned in Section 4, perhaps the explicit construction of an approximant to the function $f(x)$ in Figure 3 would yield more information on the geometry of the cusp domain onto which it can be continued. However, Charles Fefferman \cite{feff} \cite{feff2} has discovered quantitative improvements of Whitney's Theorem \cite{whit} which in some sense give $``$optimal approximants."

\begin{??}
    Can Fefferman's Theorems give quantitative control over the cusps in the cusp domain in Theorem 1.3? Can the cusps be asymptotic to two tangent circles?
\end{??}

\noindent Finally, a version of the above discussion holds in higher dimensions, prompting the following:

\begin{??}
    Characterize those domains $\Lambda \subset \Bbb{R}^n$ for which, given any unbounded harmonic function $u$ on $\Lambda$, there exists a sequence $\{x_n\}_{n \in \Bbb{N}} \subset \Lambda$ along which $u(x_j)$ $\max_{1 \leq i \leq n} |\partial_i u(x_j)|$, $\max_{1 \leq i \leq n} |\partial^2_i u(x_j)|$, $\cdots$,  $\max_{1 \leq i \leq n} |\partial^{(k)}_i u(x_j)| \rightarrow \infty$ as $j \rightarrow \infty$.
\end{??}

\noindent Different generalizations exist, for instance with the averages of the derivatives of each order instead of a maximum. The ambiguity results from the fact that the derivative is no longer uniformly expansive in all directions. We observe that Theorem 5.1 does not refer to $n=2$, and so may be applied in this context.





\end{document}